\definecolor{darkblue}{rgb}{0,0,0.8}
\definecolor{darkgreen}{rgb}{0,0.4,0}
\newtheorem{thm}{Theorem}[section]
\newtheorem{prop}[thm]{Proposition}
\newtheorem{lem}[thm]{Lemma}
\newtheorem{cor}[thm]{Corollary}
\theoremstyle{definition}
\newtheorem{Def}[thm]{Definition}
\theoremstyle{remark}
\newtheorem{rem}[thm]{Remark}
\newtheorem{ex}[thm]{Example}
\numberwithin{equation}{section}
\newcommand{\NN}{\mathrm{N}}
\newcommand{\NE}{\mathrm{NE}}
\newcommand{\Br}{\mathrm{Br}}
\newcommand{\Spec}{\mathrm{Spec}}
\newcommand{\Gal}{\mathrm{Gal}}
\newcommand{\Jac}{\mathrm{Jac}}
\newcommand{\CH}{\mathrm{CH}}
\newcommand{\nr}{\mathrm{nr}}
\def\A{\mathbb A}
\def\Z{\mathbb Z}
\def\C{\mathbb C}
\def\Q{\mathbb Q}
\def\P{\mathbb P}
\def\R{\mathbb R}
\def\bS{\mathbb S}
\def\cL{\mathcal{L}}
\def\cO{\mathcal{O}}
\def\cE{\mathcal{E}}
\def\cH{\mathcal{H}}
\def\cU{\mathcal{U}}
\def\cV{\mathcal{V}}
\def\cX{\mathcal{X}}
\def\ok{\overline{k}}
\def\wF{\widetilde{F}}
\def\whS{\hat{S}}
\def\whX{\hat{X}}
\newcommand{\isoto}{\myxrightarrow{\,\sim\,}}
\def\myrightarrow{{\setbox\z@\hbox{$\rightarrow$}\dimen0\ht\z@\multiply\dimen0 6\divide\dimen0 10\ht\z@\dimen0\box\z@}}
\def\myrightarrowfill@{\arrowfill@\relbar\relbar\myrightarrow}
\newcommand{\myxrightarrow}[2][]{\ext@arrow 0359\myrightarrowfill@{#1}{#2}}
\def\@tempa#1{\@xp\@tempb\meaning#1\@nil#1}
\def\@tempb#1>#2#3 #4\@nil#5{%
  \@xp\ifx\csname#3\endcsname\mathaccent
    \@tempc#4?"7777\@nil#5%
  \else
    \PackageWarningNoLine{amsmath}{%
      Unable to redefine math accent \string#5}%
  \fi
}
\def\@tempc#1"#2#3#4#5#6\@nil#7{%
  \chardef\@tempd="#3\relax\set@mathaccent\@tempd{#7}{#2}{#4#5}}
\begin{document}

\title[On the rationality of some real threefolds]{On the rationality of some real threefolds}

\author{Olivier Benoist}
\address{D\'epartement de math\'ematiques et applications, \'Ecole normale sup\'erieure, CNRS,
45 rue d'Ulm, 75230 Paris Cedex 05, France}
\email{olivier.benoist@ens.fr}

\author{Alena Pirutka}
\address{Courant Institute of Mathematical Sciences, New York University, New York, 10012, U.S.A.}
\email{pirutka@cims.nyu.edu}

\renewcommand{\abstractname}{Abstract}

\begin{abstract}
We study the rationality of some geometrically rational three-dimensional conic and quadric surface bundles, 
 defined over the reals and more general real closed fields, for which the real locus is connected and the intermediate Jacobian obstructions to rationality vanish.  We obtain both negative and positive results, using unramified cohomology and birational rigidity techniques, as well as concrete rationality constructions.
\end{abstract}

\maketitle

\section*{Introduction}

An algebraic variety $X$ of dimension $n$ over a field $k$ is said to be $k$\textit{-rational} if it is birational to the affine space $\A^n_k$ (equivalently,  if its function field $k(X)$ is $k$\nobreakdash-isomorphic to~$k(t_1,\dots,t_n)$).  Deciding whether a given variety $X$ is $k$-rational is an  important and often subtle problem. When $k$ is algebraically closed (say,~${k=\C}$), full answers have been known for a long time
if $n\leq 2$ (for surfaces,  this is Castel\-nuovo's theorem \cite{Castelnuovo}, see~\cite[Theorem V.1]{Beauville}).
The first nontrivial examples of nonrational varieties of dimension $3$ over $\C$ appeared in the 70's:
\begin{enumerate}[label=(\roman*)]
\item 
\label{i}
smooth cubic threefolds,  by Clemens--Griffiths \cite{CG};
\item
\label{ii}
 smooth quartic threefolds,  by Iskovskikh--Manin \cite{IM};
\item
\label{iii}
 some quartic double solids, by Artin--Mumford \cite{AM}.
\end{enumerate}
The techniques used to detect nonrationality vary: \ref{i} relies on intermediate Jacobians, \ref{ii} on birational automorphism groups and \ref{iii} on the Brauer group.

When $k$ is not algebraically closed, with algebraic closure $\ok$, the rationality problem is particularly interesting for varieties that are $\ok$-rational: it then has an arithmetic flavor. Again, full answers are known if $n\leq 2$ (for surfaces, by the works of Segre,  Manin and Iskovskikh \cite{Segre, Maninperfect, Iskovskikhimperfect},  
relying on the birational rigidity techniques also used in \ref{ii},  see \cite[Proposition 4.16]{BW2} for a precise statement).  Thanks to the development over nonclosed fields of obstructions to rationality based on intermediate Jacobians \cite{BW1, HT1, HT2, BW2, KP}, in the spirit of \ref{i}, the case of threefolds has recently seen a flurry of activity.

Our aim is to investigate the $k$-rationality of some $\ok$-rational threefolds that are at the edge of the available techniques,  and in particular out of reach of the intermediate Jacobian obstructions to rationality.  We focus on the simplest nonclosed fields: the field $\R$ of real numbers (the rationality problem for real threefolds features prominently in \cite{BW1, HT1,JJ, FJSVV,CTP, FJSVV2}), and more generally real closed fields (those fields whose absolute Galois group is $\Z/2$; they admit a field ordering whose nonnegative elements are exactly the squares \mbox{\cite[\S 1.2]{BCR}}).

\vspace{1em}

We consider two concrete sets of equations over a real closed field $R$. The first were introduced by Colliot-Th\'el\`ene and Pirutka in \cite{CTP} and read
\begin{equation}
\label{eq1}
x^2+y^2+z^2=u\cdot p(u),
\end{equation}
where $p\in R[u]$ is a nonnegative separable polynomial of even degree $d\geq 2$.  As is explained in the introduction of \cite{CTP}, the varieties (\ref{eq1}) have been chosen among all quadric surface bundles over $\P^1_R$ so that none of the available techniques sheds any light on their rationality.
As a matter of fact, over the reals, none of them are known to be rational (or even stably rational), and none of them are known not to be rational.  It is however shown in \cite{CTP} that many of them are universally $\CH_0$-trivial; see \cite[Th\'eor\`emes 5.1, 5.3 and 7.2]{CTP}.

The second set of equations are conic bundles over $\P^2_R$ originating from the article~\cite{BW1} of Benoist and Wittenberg. They have the form
\begin{equation}
\label{eq2}
x^2+y^2=f(v,w),
\end{equation}
where $f\in R[v,w]$ is a polynomial of even 
degree $d\geq 2$ such that the closure of~$\{f=0\}$ in $\P^2_R$ is a nodal rational curve with normalization $\Delta$, and either
\begin{enumerate}[label=(\ref{eq2}\alph*)]
\item
\label{eqa} 
$\Delta(R)\neq\varnothing$ (so $\Delta\simeq\P^1_R$); or
\item
\label{eqb}
$\Delta(R)=\varnothing$ (so $\Delta$ is an anisotropic conic) and $f$ is nonnegative.
\end{enumerate}

  The varieties defined by~(\ref{eq1}), \ref{eqa} and \ref{eqb} admit smooth projective models that are conic bundles over an $R$-rational surface $S$, with smooth connected rami\-fication locus $\Delta\subset S$, and residue $-1$ along~$\Delta$ (in case (\ref{eq1}), the curve $\Delta$ is the hyperelliptic curve $\{z^2=u\cdot p(u)\}$; in case~(\ref{eq2}), it is a rational curve).  
In particular, they are geometrically rational (as the residue $-1$ becomes a square over $R[\sqrt{-1}]$).  By the nonnegativity hypotheses in~(\ref{eq1})~and~\ref{eqb}, these smooth projective models also have nonempty and semi-algebraically connected real loci (connected if $R=\mathbb R$, see  \cite[Definition 2.4.2]{BCR} for a general real closed field), which is a nec\-es\-sary condition of rationality.   

The rationality of conic bundles of this form was investigated in~\cite[\S 3.3]{BW1} using intermediate Jacobians (by analogy with Beauville's classical work \cite{BeauvillePrym} 
over algebraically closed fields).  In particular, they are not $R$-rational if $\Delta$ is nonhyperelliptic.  
In contrast, equations (\ref{eq1}) and (\ref{eq2}) have been chosen so intermediate Jacobians cannot be used to disprove rationality, as we now briefly explain. In case~(\ref{eq2}), the intermediate Jacobian of the aforementioned model is trivial.
In case~(\ref{eq1}),  the arguments of \cite[Propositions 3.2 and 3.4 (iii)]{BW1} show that the intermediate Jacobian of this model is isomorphic to the Jacobian~$\Jac(\Gamma)$ of the hyperelliptic curve~$\Gamma:=\{z^2+u\cdot p(u)=0\}$. As $\Gamma(R)$ is nonempty and semi-algebraically connected,  any torsor under $\Jac(\Gamma)$ is trivial (see \cite[line below~(4.4)]{Knebusch}). 
All the known obstructions to rationality related to intermediate Jacobians therefore vanish.

\vspace{1em}

Our first result is that the varieties (\ref{eq1}) are not always rational.

\begin{thm}[Theorem \ref{th1}]
\label{th1intro}
For each even $d\geq 2$, there exists a variety $X$ with equation (\ref{eq1}) over 
the real closed field $R:=\cup_{n\geq 1}\R((t^{\frac{1}{n}}))$ that is not $R$-rational.
\end{thm}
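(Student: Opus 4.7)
The plan is to exploit the non-Archimedean valuation on $R = \bigcup_{n \geq 1} \R((t^{1/n}))$ and detect non-rationality of a carefully chosen $X$ by a $t$-adic degeneration argument in the style of Colliot-Th\'el\`ene--Pirutka.  The key freedom granted by the theorem is the choice of $p(u) \in R[u]$.

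After enlarging $n$, take $p = p(u,t) \in \R[[t^{1/n}]][u]$. This yields a projective model $\cX \to \Spec \R[[t^{1/n}]]$ of $X$ with $\R$-special fiber $\cX_0 = \{x^2+y^2+z^2 = u \bar p(u)\}$, where $\bar p = p \bmod t^{1/n}$.  The goal is to arrange $p$ so that $\cX_0$ admits a proper universally $\CH_0$-trivial resolution $Y_0$ carrying a nonzero class in $H^3_{\nr}(\R(Y_0)/\R, \Z/2)$ that is not the pullback of any class from $H^3(\R, \Z/2)$.  The degeneration principle for unramified cohomology (as in \cite{CTP} and its refinements) then propagates this class to an unramified class on $X$, obstructing stable $R$-rationality of $X$, and in particular $R$-rationality.

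Concretely, I would force $\bar p(u)$ to acquire a double real root at some $u_0 \in \R$, while keeping $p(u,t)$ nonnegative and separable for $t > 0$ in $R$; the parameter $t^{1/n}$ provides just enough infinitesimal room to separate the coincident roots in the generic fiber.  The singular locus of $\cX_0$ is then concentrated along the $\R$-conic $\{x^2+y^2+z^2 = 0\}$ in the fiber over $u_0$, and can be resolved explicitly by blow-ups, producing $Y_0$ as a quadric surface bundle over a modification of $\P^1_\R$ with a prescribed discriminant configuration.  A computation with the standard exact sequences for unramified $H^3$ of quadric surface bundles then exhibits the desired nonzero class, typically as a residue symbol supported on an engineered component of the discriminant that is geometrically visible over $\R$.

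The main obstacle will be the explicit construction of $p(u,t)$ that simultaneously (i) produces a degeneration whose $\CH_0$-trivial resolution has nontrivial $H^3_{\nr}$ over $\R$, and (ii) preserves the nonnegativity, separability, and even-degree constraints of \eqref{eq1}.  The real-closedness of $R$ requires $u p(u)$ to be nonnegative on $R$, and it is precisely the Puiseux structure, through the parameter $t^{1/n}$, that provides the deformation room to satisfy all these conditions simultaneously while still placing the central fiber outside the universal $\CH_0$-triviality range established in \cite{CTP} over $\R$.
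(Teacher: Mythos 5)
Your proposal does not match the paper's argument and contains a genuine gap that would prevent it from succeeding.

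The central difficulty with your approach is the step asking for a resolution $Y_0$ of the special fiber over $\R$ carrying a nonconstant class in $H^3_{\nr}(\R(Y_0)/\R,\Z/2)$. Producing such a real threefold of the relevant shape is precisely the open problem that this paper circumvents by working over the exotic field $R=\cup_n\R((t^{1/n}))$ in the first place; the introduction explicitly records that over $\R$ none of the varieties~(\ref{eq1}) are known to be non-rational, and nothing in \cite{CTP} supplies a real special fiber of a quadric-surface-bundle degeneration with nonzero unramified $H^3$. You would therefore be reducing the stated theorem to a strictly harder and unresolved problem.

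The actual proof takes a different route, with two ingredients you do not use. First, the obstruction class $\alpha=(u+v,-1,-1)$ does not live on $X$: it lives on the product $X\times Y$ with an auxiliary hyperelliptic curve $Y$ (the ``$w$-equation'' in (\ref{equationsproduct})), and the equivalence between $\alpha\neq 0$ and the failure of universal $\CH_0$-triviality of $X$ is the content of \cite[Th\'eor\`emes 3.2 et 3.3]{CTP}. This device is essential and is absent from your sketch. Second, since $\alpha$ is already known to be \emph{unramified} over $R$, one cannot detect $\alpha\neq 0$ by computing a residue of $\alpha$ on a model over $R$; and since $R=\cup_n\R((t^{1/n}))$, it does not suffice to run a single specialization over one $\R[[t^{1/n}]]$. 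Instead, for each $m$ the paper performs a change of variables weighted by powers of $s=t^{1/4m}$ to produce a model over $\R[[s]]$ whose special fiber contains a divisor $D_1$ (stably birational to the anisotropic real $3$-quadric), and shows that $\alpha$ has nonzero residue $(-1,-1)$ along $D_1$ using Pfister's computation of the level. Crucially the ramifying divisor depends on $m$, which is exactly what the introduction flags as the novel point; your single-degeneration scheme has no analogue of this, and a fixed degeneration cannot account for the passage to the union $R$. So the proposal is not a valid route, and it also misses the two structural ideas (auxiliary curve; $n$-dependent ramification divisors) that make the theorem go through.
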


Our examples are given by concrete equations. They are not even stably rational.

The proof uses unramified cohomology groups. These higher degree analogues of 
the Brauer group give rise to generalizations of the Artin--Mum\-ford approach~\ref{iii} to irrationality,  as was discovered by Colliot-Th\'el\`ene and Ojanguren in \cite{CTO}. To be precise, we rely on an unramified cohomology class $\alpha\in H^3_{\nr}(R(X\times Y)/R,\Z/2)$ (where $Y$ is some algebraic curve over $R$) whose vanishing was shown in \cite{CTP}  to be equivalent to the universal $\CH_0$-triviality of $X$, and hence whose nontriviality is an obstruction to the rationality of $X$ (see \cite[Th\'eor\`eme 4.8]{CTP} for $k=\mathbb R$ or \cite[Th\'eor\`eme 3.5]{CTP} with $a=b=-1$ for a general real closed field).  We show that $\alpha$ is nonzero as follows.  The varieties $X$ and $Y$ and the class~$\alpha$ are all defined over~$\R((t))$.  
It suffices to show that $\alpha$ is nonzero over $\R((t^{\frac{1}{n}}))$ for all $n\geq 1$. To do so, we show that the class $\alpha$ is ramified along some divisor supported on the special fiber of a model of~$X\times Y$ over $\R[[t^{\frac{1}{n}}]]$,  using \textit{different} divisors for different values of $n$.
We believe that this way of detecting the nontriviality of an unramified cohomology class is novel.

We leave entirely open the question of the rationality of the varieties~(\ref{eq1}) over~$\R$. 
Nevertheless, Theorem~\ref{th1intro} yields some information on this rationality problem. 
It implies that there cannot exist a single rationality construction (or even a finite number of them) showing that all the varieties $X$ defined by (\ref{eq1}) with $p\in\R[u]$ of fixed even degree~$d\geq 2$ are $\R$-rational.
In particular, we obtain the next corollary.

\begin{cor}[Corollary \ref{correal}]
\label{correalintro}
Fix $d\geq 2$ even and $\delta\geq 0$.  There exists a variety~$X$ over $\R$ with equation (\ref{eq1}) for this value of $d$ such that there is no birational map~$X\dashrightarrow\A^3_{\R}$ given by rational functions of degrees $\leq\delta$ in the variables~$(x,y,z,u)$.
\end{cor}

\vspace{.5em}

Our second result demonstrates that the equations \ref{eqa} and \ref{eqb} 
are rational in low degree.

\begin{thm}[Theorem \ref{th2}]
\label{th2intro}
The algebraic varieties defined by (\ref{eq2}) with $d\leq4$
 are always $R$-rational, over any real closed field $R$.
\end{thm}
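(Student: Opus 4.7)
The plan is to treat $d=2$ and $d=4$ separately: the degree two case is a smooth quadric threefold with an $R$-point, and the degree four case is reduced to the degree two case by a Cremona transformation centered at the three nodes of $\{f=0\}$.

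\textbf{Case $d=2$.}  Homogenizing gives a quadric threefold $\bar X=\{X^2+Y^2=F(V,W,T)\}\subset\P^4_R$, which is smooth because the discriminant conic $\Delta=\{F=0\}$ is (so the $3\times 3$ symmetric matrix of $F$ is nondegenerate). I would then produce an $R$-point of $\bar X$: in case \ref{eqa}, any real point $[v_0:w_0:t_0]$ of $\Delta\simeq\P^1_R$ yields $[0:0:v_0:w_0:t_0]\in\bar X(R)$; in case \ref{eqb}, the hypotheses $f\geq 0$ on $R^2$ and $\Delta(R)=\varnothing$ force $f>0$ on $R^2$, so any $(v_0,w_0)\in R^2$ gives $(\sqrt{f(v_0,w_0)},0,v_0,w_0)\in\bar X(R)$. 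A smooth quadric threefold over $R$ with an $R$-point is $R$-rational by linear projection from the point.

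\textbf{Case $d=4$.}  The nodal rational quartic $\{f=0\}$ has exactly three nodes (from $p_a=3$, $p_g=0$) that are non-collinear (otherwise the line through them would be a component of the irreducible quartic). The reduced $0$-cycle $Z$ of nodes is Galois-stable, so the Cremona transformation $\sigma:\P^2_R\dashrightarrow(\P^2_R)'$ given by the $R$-defined linear system of conics through $Z$ is defined over $R$. On the common blowup $S:=\mathrm{Bl}_Z\P^2_R$, a del Pezzo surface of degree $6$, the strict transform $\Delta$ of $\{f=0\}$ has class $4H-2\sum_i E_i=2\pi'^*H'$ and is disjoint from the three $(-1)$-curves contracted by $\pi':S\to(\P^2_R)'$; hence $\pi'$ identifies $\Delta$ with a smooth conic $C'\subset(\P^2_R)'$.

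Comparing the divisors $\mathrm{div}(\pi^*f)=\Delta+2\sum_i E_i$ and $\mathrm{div}(\pi'^*f')=\Delta$ on $S$ (with $f'$ the degree-$2$ defining equation of $C'$), the ratio $\pi^*f/\pi'^*f'$ is the unique (up to scalar) effective section of $\cO_S(2\sum_i E_i)$, namely the square of the canonical section of $\cO_S(\sum_i E_i)$. Locally this reads $f(v,w)=h(v,w)^2 f'(\sigma(v,w))$; concretely, when the three nodes are $R$-rational and placed at the coordinate vertices, $\sigma(v,w)=(1/v,1/w)$ and $f(v,w)=(vw)^2f'(1/v,1/w)$ by direct computation. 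The substitution $x=h\cdot x'$, $y=h\cdot y'$ then transports $X$ birationally to the variety $X'\colon x'^2+y'^2=f'(v',w')$ of type (\ref{eq2}) with $d=2$, preserving cases \ref{eqa} and \ref{eqb} (since $C'\simeq\Delta$); in case \ref{eqb}, $f'=f/h^2\geq 0$ on the dense open locus $\{h\neq 0\}$ and hence everywhere by continuity. The $d=2$ step then yields $R$-rationality of $X'$, and hence of $X$.

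The main obstacle will be making the Cremona reduction fully rigorous when the three nodes are not individually $R$-rational (they may form a single Galois orbit), which prevents an $R$-linear change of coordinates placing them at the vertices. I would address this either by extending scalars to $R(\sqrt{-1})$, performing the explicit coordinate computation there, and descending via Galois-equivariance, or by working intrinsically on $S$ with the $R$-defined divisors $\Delta$ and $\sum_i E_i$. The sign condition in case \ref{eqb} is taken care of by the continuity argument above.
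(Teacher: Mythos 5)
Your proposal is correct in all its essential steps, and it takes a genuinely different route from the paper's. The paper proves the $d=4$ case by a long explicit computation: it first normalizes the position of the nodes (its Lemma~\ref{elem1} distinguishes whether all three nodes are $R$-rational or only one is), then views $X$ as a quadric surface bundle over $\A^1$ and performs ad hoc completions of squares to reduce to a special family of quadric bundles (its Lemma~\ref{elem2}) that it checks directly to be rational; case~\ref{eqb} is handled separately by observing that nonnegativity gives a sum-of-two-squares section of the quadric bundle. Your argument instead uses the standard quadratic Cremona transformation $\sigma$ centered at the three nodes of the quartic, exploiting that the strict transform $\Delta$ of $C$ on the degree-$6$ del~Pezzo surface $S=\mathrm{Bl}_Z\P^2_R$ has class $2\pi'^*H'$, is disjoint from the curves contracted by $\pi'$, and hence maps isomorphically to a smooth conic $C'\subset(\P^2_R)'$; the relation $\pi^*F=h^2\,\pi'^*F'$ (with $h$ the canonical section of $\cO_S(\sum E_i)$, unique as $h^0(\cO_S(2\sum E_i))=1$) then yields, after the substitution $x=hx'$, $y=hy'$, a birational map from $X$ to the degree-$2$ variety $X':x'^2+y'^2=f'(v',w')$, uniformly in both cases~\ref{eqa} and~\ref{eqb}. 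This is cleaner and avoids the case distinction entirely. The one issue you flag—that the nodes need not be individually $R$-rational—is real but, as you say, is handled by either of the two fixes you sketch; the intrinsic one is straightforward since the $0$-cycle $Z$, the linear system $|2H-\sum E_i|$, and the divisors $\Delta$ and $\sum E_i$ on $S$ are all defined over $R$, so the factorization $\pi^*F=\lambda h^2\pi'^*F'$ holds over $R$ with a scalar $\lambda\in R^*$ that can be absorbed into $F'$ (adjusting the sign of $F'$ if necessary so that $f'\ge 0$ in case~\ref{eqb}, which is forced as you note since $f'\circ\sigma=f/h^2\ge 0$ on a Euclidean-dense subset of $R^2$). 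Your $d=2$ step matches what the paper describes as ``obvious,'' and is correctly carried out. Two small points worth tightening in a write-up: (a) write $f'\circ\sigma=f/h^2$ rather than ``$f'=f/h^2$'' to keep the sides living on the right planes; (b) over a real closed field with $\Gamma_R=\Z/2$, the three nodes in fact cannot form a single Galois orbit, so at least one is always $R$-rational (as the paper's Lemma~\ref{elem1} also observes)—your worry about that case is unfounded, though the intrinsic argument would cover it anyway.
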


When $d=2$, the statement of Theorem \ref{th2intro} is obvious, as the varieties under consideration are irreducible quadrics with a smooth rational point.  When $d=4$, our proof proceeds by giving direct rationality constructions for these varieties,  based on a detailed study of nodal rational quartic curves over $R$.

\vspace{1em}

We finally show that the varieties (\ref{eq2}) are not rational in high degree.

\begin{thm}[Theorem \ref{th3}]
\label{th3intro}
The algebraic varieties with equation (\ref{eq2}) are never $R$-rational if $d\geq 12$, over any real closed field $R$.
\end{thm}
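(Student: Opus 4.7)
The plan is to prove non-rationality by exhibiting birational rigidity of the natural conic bundle model of~(\ref{eq2}), via a real analogue of the Sarkisov--Iskovskikh criterion.

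I would first construct a smooth projective standard conic bundle. Let $C\subset\P^2_R$ be the closure of $\{f=0\}$, a nodal rational curve of degree $d$ with $n=\binom{d-1}{2}$ nodes $p_1,\dots,p_n$. Let $\sigma\colon S\to\P^2_R$ be the blow-up at these nodes, with exceptional divisors $E_1,\dots,E_n$, write $H=\sigma^*\cO_{\P^2}(1)$, and let $\Delta\subset S$ be the strict transform of $C$. Then $\Delta$ is smooth (it is the normalization of $C$, so it is $\P^1_R$ or an anisotropic conic according to whether we are in case \ref{eqa} or \ref{eqb}), and (\ref{eq2}) gives rise, after the standard elementary transformations along the exceptional divisors, to a smooth projective standard conic bundle $\pi\colon X\to S$ with ramification $\Delta$ and residue $-1$ along~$\Delta$. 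The key numerical input is
\[
K_S \;=\; -3H+\sum_i E_i,\qquad \Delta \;\sim\; dH-2\sum_i E_i,\qquad 4K_S+\Delta \;\sim\; (d-12)H+2\sum_i E_i,
\]
so that $|4K_S+\Delta|\neq\varnothing$ as soon as $d\geq 12$.

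I would then invoke a real version of Sarkisov's birational rigidity theorem: a smooth projective standard conic bundle $\pi\colon X\to S$ over a geometrically rational $R$-surface $S$ whose discriminant $\Delta$ satisfies $|4K_S+\Delta|\neq\varnothing$ is not $R$-rational. Over algebraically closed fields of characteristic zero this is classical (Sarkisov, refined by Iskovskikh and Pukhlikov): one runs a Noether--Fano analysis of a hypothetical birational map to a simpler Mori fiber space, and an effective member of $|4K_S+\Delta|$ yields a numerical contradiction with the bidegree of a movable linear system. Over~$R$ one cannot pass directly to $\overline R=R[\sqrt{-1}]$, since $X_{\overline R}$ is in fact $\overline R$-rational (the equation $x^2+y^2=f$ becomes $uv=f$ after the substitution $u=x+\sqrt{-1}y$, $v=x-\sqrt{-1}y$, which is obviously rational); the argument must instead be run $\Gal(\overline R/R)$-equivariantly, pairing complex-conjugate maximal singularities and ensuring that every intermediate Mori link descends from $\overline R$ to $R$.

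The main obstacle I anticipate is precisely this Galois-equivariant implementation of the Sarkisov program. In case~\ref{eqb}, where $\Delta$ has no $R$-point, one must handle base loci supported on closed subschemes of $\Delta$ that are not individually defined over $R$, and verify that the classical multiplicity estimates survive when applied to $\Gal(\overline R/R)$-orbits; one must also check that untwisting links of maximal singularities can be chosen to be $R$-morphisms rather than merely $\overline R$-morphisms. Once this real rigidity statement is in place, the bound $d\geq 12$ is exactly the threshold at which the Sarkisov inequality $4K_S+\Delta\geq 0$ becomes available on the blow-up of $\P^2_R$ at the nodes of a plane curve of degree $d$, yielding non-rationality of $X$ over every real closed field~$R$.
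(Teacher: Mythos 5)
Your proposal matches the paper's proof in substance: blow up the nodes of $C$ to obtain a standard conic bundle $\pi\colon X\to S$, compute $4K_S+\Delta\sim(d-12)H+2\sum_i E_i$ so that the Sarkisov inequality $|4K_S+\Delta|\neq\varnothing$ holds precisely for $d\geq 12$, and invoke a nonclosed-field version of Sarkisov's birational superrigidity theorem for standard conic bundles. The one place where your framing diverges from the paper's, and where some care is warranted, is in how that rigidity theorem is to be established. You propose to run the Sarkisov program $\Gal(\overline R/R)$-equivariantly over $\overline R=R[\sqrt{-1}]$, pairing conjugate maximal singularities and ensuring each link descends to $R$. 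The paper instead runs the MMP and the Sarkisov algorithm \emph{directly over $k$} (Sections 3 and 4), descending only the cone and contraction theorems from $\overline k$, and it explicitly cautions that the two points of view are not interchangeable: a Mori fiber space or a Sarkisov link over $k$ need not remain one over $\overline k$, since neither the relative Picard rank one condition nor $\Q$-factoriality is preserved under base change (indeed, contractions produced over $k$ may fail to be extremal over $\overline k$). So the ``descend each link from $\overline R$'' picture you sketch is not quite what happens; the paper's approach sidesteps the very descent difficulties you flag as the main obstacle, and is also cleaner in that it yields the result over an arbitrary field of characteristic zero (Theorem~\ref{th3}), not only over real closed fields.
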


As we will explain below,  it is reasonable to conjecture that Theorem \ref{th3intro} extends to all degrees $d\geq 6$,  which would fit nicely with Theorem \ref{th2intro}.

Theorem \ref{th3intro} is proved using birational rigidity techniques, and more precisely by applying a variant over a nonclosed field (Theorem \ref{th4intro} below) of a theorem of Sarkisov \cite[Main Theorem]{Sarkisov}.  
Before stating it,  we give a few definitions.  \textit{Mori fiber spaces}~${\pi:X\to S}$ are a certain kind of Fano fibrations. To be precise, they are morphisms with geometrically connected positive-dimensional fibers between connected normal projective varieties over $k$,  that have relative Picard rank~$1$,  and such that $X$ is $\Q$-factorial and terminal with $\pi$-ample anticanonical divisor (see Defi\-ni\-tion~\ref{defMFS}).  Important examples for us are \textit{standard conic bundles}: those flat Mori fiber spaces whose generic fiber is a conic, with $X$ and $S$ smooth and snc degeneration divisor (see Defi\-ni\-tion~\ref{defstandard}). 
A Mori fiber space~$\pi:X\to S$ is said to be \textit{birationally superrigid} if any birational map~$\phi:X\dashrightarrow X'$ to the total space of another Mori fiber space~$\pi':X'\to S'$ induces an isomorphism between the generic fibers of $\pi$ and $\pi'$ (see Definition \ref{defbirrig}).

\begin{thm}[Theorem \ref{th4}]
\label{th4intro}
Let $k$ be a field of characteristic $0$. Let $\pi:X\to S$ be a standard conic bundle of dimension $3$ over $k$ with discriminant curve $\Delta\subset S$. 
Assume that $|4K_S+\Delta|\neq\varnothing$. Then $\pi:X\to S$ is birationally superrigid.
\end{thm}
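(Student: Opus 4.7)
The plan is to follow the Noether--Fano--Iskovskikh method underlying Sarkisov's original theorem, carrying out each step Galois-equivariantly so as not to have to pass to $\bar k$. Suppose for contradiction that $\phi\colon X\dashrightarrow X'$ is a birational map to another Mori fiber space $\pi'\colon X'\to S'$ that does \emph{not} induce an isomorphism between the generic fibers of $\pi$ and $\pi'$. The first step is to pull back a general movable linear system $\mathcal{M}'$ from $X'$ (for instance the strict transform of $|-\mu K_{X'}+(\pi')^{*}H'|$ for $H'$ very ample on $S'$ and $\mu$ large) to obtain a movable system $\mathcal{M}$ on $X$. Since $\pi$ has relative Picard rank one, we may write $\mathcal{M}\equiv -nK_X+\pi^{*}L$ numerically, with $n>0$ and $L\in\mathrm{Pic}(S)_{\mathbb{Q}}$.

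Next, I would invoke the Noether--Fano--Iskovskikh criterion for the log pair $(X,\tfrac{1}{n}\mathcal{M})$; this is a purely birational and numerical statement valid over any characteristic-zero field. It says that unless the pair admits a divisorial valuation $E$ of strictly negative discrepancy (a \emph{maximal singularity}), a Sarkisov-degree comparison between $(X,\mathcal{M},n)$ and $(X',\mathcal{M}',\mu)$ forces $\phi$ to preserve fibers, contradicting our assumption. Such an $E$ must therefore exist, and by Galois-equivariance the scheme-theoretic $\mathrm{Gal}(\bar k/k)$-orbit of its center $Z:=c_X(E)$ is a $k$-rational cycle all of whose components are maximal singularities. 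I would then run the standard Pukhlikov-type multiplicity inequalities, intersecting two general members of $\mathcal{M}$ with a general fiber of $\pi$ (in the case where $\pi(Z)\subset S$ is a curve) or with the preimage of a general curve on $S$ through $\pi(Z)$ (in the case where $\pi(Z)$ is a point), always summing multiplicities over the Galois orbit of $Z$ in place of the single closed point used in the classical proof.

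The hypothesis $|4K_S+\Delta|\neq\varnothing$ enters through the standard conic-bundle identity expressing $4K_S+\Delta$ as a pushforward of a self-intersection on $X$: an effective member of this linear system translates the existence of the maximal singularity into a numerical inequality on $L$ and on the local multiplicities that cannot be satisfied, yielding the desired contradiction. The main obstacle I anticipate is ensuring that the Pukhlikov multiplicity inequality remains sharp enough when summed over a Galois orbit rather than applied at a single closed geometric point; one must check that both sides scale in a compatible way so that the orbit-summed inequality still forces a contradiction with the positivity of $4K_S+\Delta$. When $k=\bar k$, the argument reduces to Sarkisov's original proof.
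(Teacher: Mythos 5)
Your proposal takes a genuinely different route from the paper (a direct Noether--Fano--Iskovskikh / maximal-singularities argument in the style of Iskovskikh--Manin and Pukhlikov), whereas the paper follows Corti and Prokhorov and decomposes $\phi$ into Sarkisov links, then analyzes the first non-square link. Unfortunately, as written, your argument has a genuine gap in the structure of the Noether--Fano dichotomy, and the hypothesis $|4K_S+\Delta|\neq\varnothing$ is attached to the wrong case.

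The correct form of the Noether--Fano inequality (Theorem~\ref{NF} in the paper, which is \cite[Theorem 4.2]{Corti1}) is a \emph{two}-condition criterion: $\phi$ is an isomorphism of Mori fiber spaces provided $(X,\frac{1}{\mu}\cH)$ is canonical \emph{and} $K_X+\frac{1}{\mu}\cH$ is nef. Negating it gives three possibilities: a maximal singularity exists, or the log canonical class fails to be nef, or $\phi$ is already an isomorphism. Your sketch only treats the first of these. This matters crucially for conic bundles: maximal singularities of a movable log pair on a $\Q$-conic bundle are removed by Sarkisov links of type I and II, and (as used in the paper's proof of Theorem~\ref{th4}) such links between $\Q$-conic bundles are automatically \emph{square}. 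So the existence of a maximal singularity poses no obstruction by itself and cannot be played against $|4K_S+\Delta|\neq\varnothing$ in the way you describe. The hypothesis is instead needed to exclude the ``canonical but not nef'' case, i.e.\ Sarkisov links of type III or IV with a point or a curve as base $T$. There, the identity $-\pi_*K_X^2\equiv 4K_S+\Delta$ together with the projection formula yields $\pi_*(\cH^2)\equiv 4\mu A - \mu^2(4K_S+\Delta)$, and the non-nefness of $A$ along the extremal contraction $p\colon S\to T$ (combined with effectivity of $\pi_*(\cH^2)$) forces $4K_S+\Delta$ to be non-effective on the relevant base $\bar S$. A second ingredient you would also need, and do not address, is a descent statement such as Lemma~\ref{lemconic2}: because the square links change the base surface, one needs to transfer the effectivity of $|4K_S+\Delta|$ to $|4K_{\bar S}+\bar\Delta|$ through a common standard model, which is not automatic. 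By contrast, the Galois-orbit summation issue you worry about is not the real difficulty, since the whole MMP and Noether--Fano machinery can be run directly over $k$ as in \S\ref{parMMP}.
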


To deduce Theorem \ref{th3intro},  we consider a birational model of~(\ref{eq2}) that is the total space of a standard conic bundle $\pi:X\to S$ (whose base $S$ is the blow-up of $\P^2_R$ at the nodes of the projective closure of $\{f=0\}$). The hypothesis that $d\geq 12$ allows us to verify the hypothesis that $|4K_S+\Delta|\neq\varnothing$ in Theorem \ref{th4intro}.  We therefore deduce that $\pi:X\to S$ is birationally superrigid,  and hence that $X$ is not rational.

When $k$ is algebraically closed, Theorem \ref{th4intro} is due to Sarkisov \cite{Sarkisov} (see also the articles of Corti \cite{Corti1,Corti2} and Prokhorov \cite{Prokhorov}).  It is clear to the experts that the proof of Sarkisov's theorem given in~\cite{Corti1, Corti2,Prokhorov} goes through over a nonclosed field $k$ (see e.g.\ \cite[Theorem 27]{KollarCY}).  Since this fact is crucial to our application to Theorem \ref{th3intro}, and since we could not find detailed explanations in the literature, we devote Sections~\ref{sec3} and \ref{sec4} to spelling out the minor changes to make to the proofs of \cite{Corti1, Corti2,Prokhorov} work over nonclosed fields.

We insist that Theorem \ref{th4intro} is not a formal consequence of Sarkisov's theorem over~$\ok$, because being a Mori fiber space is not invariant under extensions of the base field (neither the relative Picard rank $1$ condition nor the $\Q$-factoriality hypothesis are preserved in general; in particular, for standard models of varieties in Theorem \ref{th3intro}  the relative Picard rank is $1$ over $R$, but greater than $1$ over the algebraic closure, see Lemma \ref{lemstandardmodel} and its proof). This also explains why we are able to detect the nonrationality of some $\ok$-rational varieties. 

To the best of our knowledge, Theorem \ref{th3intro} is the first application of birational rigidity techniques to $k$-rationality problems for $\ok$-rational varieties in dimen\-sion~${\geq 3}$.  However, these techniques have been applied in this precise way in dimension $2$ for a long time.  Indeed, they have been at the heart of the birational classification of geometrically rational surfaces over a nonclosed field $k$, ever since Segre's proof that cubic surfaces of Picard rank $1$ over $k$ are never~$k$-rational \cite[Theorems 3 and 5]{Segre}. These techniques have also been applied in the equivariant setting, in particular in dimen\-sion~${\geq 3}$: for recent progress on equivariant birational rigidity and automorphism groups of rings of invariants see, e.g., \cite{CCG1, CCG2, AG1, BCDP, PG1, KG1}.

Sarkisov's theorem is not expected to be optimal.  Corti \cite[Problem 4.13]{Corti2} asks whether one
could replace the condition that $|4K_S+\Delta|\neq\varnothing$ by the weaker condition that $|3K_S+\Delta|\neq\varnothing$.  In addition, it is conjectured by Shokurov \cite[Conjecture 10.3]{Shokurov} (see also \cite[Conjecture 1.2]{Prokhorov}) that under the even weaker hypothesis that $|2K_S+\Delta|\neq\varnothing$, the variety $X$ should not be rational.  If the analogues of these questions over nonclosed fields had positive answers, we could deduce that the varieties with equation (\ref{eq2}) are never rational for $d\geq 6$.

We finally note that birational rigidity techniques are not applicable to the varieties defined by the equations (\ref{eq1}), since they are birational both to Mori fiber spaces that are conic bundles,  and quadric surface bundles.

\subsection*{Acknowledgements}

The first author thanks NYU for its hospitality during his stay in New York in September and October 2024. The second author was partially supported by  NSF grant DMS-2201195. We would like to thank J.-L. Colliot-Th\'el\`ene for several discussions, for  pointing out the  application to singular cubics in Remark \ref{remexample}(ii) and for simplifying the proof of Theorem \ref{th2}(a). We are grateful to J. Koll\'ar and Y. Prokhorov for comments on the manuscript.

\section{Degree $3$ unramified cohomology}
\label{sec1}

In this section, we prove Theorem \ref{th1intro}.

\subsection{A nonrational threefold over a nonarchimedean real closed field}

Recall that a smooth projective variety~$X$ over a field~$k$ is \textit{universally} $\CH_0$\textit{-trivial} if the degree map $\deg:\CH_0(X_l)\to \Z$ is an isomorphism for all field extensions $l$ of~$k$, and
that $X$
 is \textit{stably} $k$\textit{-rational} if $X\times_k\P^N_k$ is $k$-rational for some~$N\geq 0$. Rational varieties are stably rational 
and stably rational varieties are universally $\CH_0$\nobreakdash-trivial (see \cite[Lemma 1.5]{CTPspe}).

\begin{thm}
\label{th1}
Consider the real closed field $R:=\cup_{n\geq 1}\R((t^{\frac{1}{n}}))$.  Fix $e\geq 0$. The smooth projective models of the algebraic variety over $R$ defined by the equation
\begin{equation}
\label{eqR}
x^2+y^2+z^2=(u-1)(u^2+t)\prod_{j=1}^e(ju^2+1)
\end{equation}
are not universally $\CH_0$-trivial, and consequently not stably $R$-rational.
\end{thm}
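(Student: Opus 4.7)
The plan follows the strategy announced after Theorem~\ref{th1intro}. Set $s_n := t^{1/n}$, so that $R = \cup_{n\geq 1}\R((s_n))$, and note that equation~(\ref{eqR}) and its smooth projective model are defined over $\R((t))\subset R$.

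\medskip

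\emph{Step 1 (reduction to an unramified class).} By \cite[Th\'eor\`emes 5.1, 5.3 and 7.2]{CTP} applied to~(\ref{eqR}), one produces a smooth projective curve $Y$ over $R$ (descending to $\R((t))$) and a class $\alpha \in H^3_{\nr}(R(X\times Y)/R, \Z/2)$ whose vanishing is equivalent to the universal $\CH_0$-triviality of $X$. Here $Y$ is essentially a smooth model of the hyperelliptic curve associated to the right-hand side of~(\ref{eqR}), and $\alpha$ is constructed as a symbol of the form $(-1,-1,g)$ for a distinguished rational function $g$ on $X\times Y$ built from the factorization of the right-hand side of~(\ref{eqR}). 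It thus suffices to prove $\alpha\neq 0$; the failure of stable rationality follows automatically from the fact that stably rational varieties are universally $\CH_0$-trivial (see \cite[Lemma 1.5]{CTPspe}).

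\medskip

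\emph{Step 2 (reduction to each $\R((s_n))$).} Unramified cohomology commutes with filtered colimits of base fields. Since $\alpha$ is defined over $\R((t))$ and $R = \varinjlim_n \R((s_n))$, the class $\alpha$ is nonzero in $H^3_{\nr}(R(X\times Y)/R,\Z/2)$ if and only if its pullback $\alpha_n$ to $H^3_{\nr}(\R((s_n))(X\times Y)/\R((s_n)),\Z/2)$ is nonzero for every $n\geq 1$. By the residue exact sequence on a regular proper flat $\R[[s_n]]$-model $\cX_n$ of a smooth projective model of $X\times Y$, the latter assertion would follow from exhibiting, for each $n$, a prime divisor $D_n$ on the special fiber of $\cX_n$ along which the residue $\partial_{D_n}(\alpha_n)\in H^2(R(D_n),\Z/2)$ is nontrivial.

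\medskip

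\emph{Step 3 (constructing $\cX_n$ and $D_n$; the main difficulty).} The naive base change is singular because of the factor $u^2+t = u^2 + s_n^n$ of~(\ref{eqR}), whose geometry over $\R[[s_n]]$ depends strongly on $n$. The key and novel idea is to use \emph{different} models $\cX_n$ for different $n$, obtained from the naive model by successive blow-ups centered in ever-deeper infinitesimal neighborhoods of $\{u=s_n=0\}$; each resulting $\cX_n$ carries a distinguished exceptional component $D_n$ on which the residue $\partial_{D_n}(\alpha_n)$ takes the explicit symbolic form $(-1,-1,\bar g_n)\in H^2(R(D_n),\Z/2)$ for a concrete $\bar g_n\in R(D_n)^\times/(R(D_n)^\times)^2$. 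Nontriviality of this residue is then verified by specializing at a carefully chosen $R$-point of $D_n$ at which $\bar g_n$ becomes a negative element of $R^\times$, so that $(-1,-1,\bar g_n)$ specializes to the nonzero class $(-1,-1,-1)\in H^3(R,\Z/2)$. The main obstacle is the model-by-model bookkeeping: for each $n$ one must simultaneously identify the correct blow-up tower, the component $D_n$ on which $\bar g_n$ is a well-defined nonsquare, and a real point at which it specializes with the right sign. These choices genuinely vary with $n$, and it is precisely here that the uniformizer $s_n$ of $\R[[s_n]]$ forces the ramification locus of $\alpha$ to move from model to model.
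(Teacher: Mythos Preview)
Your Steps~1 and~2 match the paper's structure, though you cite the wrong results from \cite{CTP}: the construction of the curve $Y$ and the class $\alpha$, together with the equivalence between $\alpha\neq0$ and failure of universal $\CH_0$-triviality, is \cite[Th\'eor\`emes~3.2 et~3.3]{CTP}; Th\'eor\`emes~5.1, 5.3 and 7.2 are positive $\CH_0$-triviality results and do not give what you need. Concretely, $Y$ is the curve $w^2=(v+1)(v^2+t)\prod_j(jv^2+1)$ and $\alpha=(u+v,-1,-1)$.

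Step~3 contains a genuine gap. First, the residue of a degree-$3$ class along a divisor lives in $H^2$, so $\partial_{D_n}(\alpha_n)$ is a $2$-symbol, not a $3$-symbol; in the paper it is simply $(-1,-1)\in H^2(\R(D_n),\Z/2)$. More seriously, your plan to verify nontriviality by specializing at an $R$-point of $D_n$ cannot work here. After the change of variables $u=s^{2m}u'$, $v=s^{2m-1}v'$, $w=s^{2m-1}w'$, $x=s^{2m}x'$, etc.\ (with $s=t^{1/(4m)}$), the relevant component $D_1$ of the special fiber is cut out by
\[
x'^2+y'^2+z'^2+u'^2+1=0,\qquad w'=v',
\]
which is (stably) the $3$-dimensional anisotropic real quadric and has \emph{no} $R$-points whatsoever. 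The nonvanishing of $(-1,-1)$ on $\R(D_1)$ therefore cannot be detected by evaluation; the paper instead invokes Pfister's theorem that the level of this function field is $4$, so $-1$ is not a sum of two squares in $\R(D_1)$ and hence $(-1,-1)\neq0$. Without this level argument (or an equivalent replacement), your Step~3 does not go through. Note also that one need not treat every $n$: it suffices to handle $n=4m$ for all $m\geq1$, since $\R((t^{1/n}))\subset\R((t^{1/(4n)}))$.
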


\begin{rem}\label{remexample}
(i)
The change of variable $u\mapsto u+1$ shows that (\ref{eqR}) is of the form~(\ref{eq1}) with $d=2e+2$. The equation (\ref{eqR}) will be more adapted to our computations.

(ii)
Our simplest example, for $e=0$, has equation $$x^2+y^2+z^2=(u-1)(u^2+t).$$
This example is also  birational to a singular cubic threefold $X$ over $R$, defined by the homogeneous equation
$$x^2v+y^2v+z^2v-(u-v)(u^2+tv^2)=0.$$
The singular locus of $X$ is the anisotropic conic ${x^2+y^2+z^2=u=v=0}$. In particular, the variety $X$ provides a negative answer to  
\cite[Section 7, Remark~7.2]{CTZ} over the real closed field $R$. 
\end{rem}

For $m\geq 1$, let~$K_m$ be the function field of the~$\R[[s]]$-subscheme of $\A^6_{\R[[s]]}$ (with coordinates $(u,v,w,x,y,z)$) defined by the equations
\begin{equation}
\label{equ1}
\left\{ \begin{aligned} 
x^2+y^2+z^2 &= (u-1)(u^2+s^{4m})\prod_{j=1}^e(ju^2+1)\\
w^2 &= (v+1)(v^2+s^{4m})\prod_{j=1}^e(jv^2+1).
\end{aligned} \right.
\end{equation}

The next proposition is key to the proof of Theorem \ref{th1}.

\begin{prop}
\label{propnr}
For all $m\geq 1$, the class $(u+v,-1,-1)\in H^3(K_m,\Z/2)$ does not belong to $H^3_{\nr}(K_m/\R,\Z/2)$.
\end{prop}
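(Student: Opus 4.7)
The strategy is to exhibit, for each $m \geq 1$, a rank-one discrete valuation $\nu$ of $K_m$ trivial on $\R$ at which the residue of $(u+v,-1,-1)$ is nonzero. By the tame-symbol formula, and since $-1$ is a unit at any valuation trivial on $\R$, this residue equals $v_\nu(u+v)\cdot (-1,-1)\pmod 2$ in $H^2(\kappa(\nu),\Z/2)$; it therefore suffices to arrange that $v_\nu(u+v)$ is odd and that $(-1,-1)\neq 0$ in $\Br(\kappa(\nu))[2]$. The latter condition is ensured, for example, whenever $\kappa(\nu)$ is formally real, as $-1$ is then not a sum of two squares.

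As a starting observation, the analogous statement over the subfield $M_0:=\R((s))(u,v,x,y)\subset K_m$ is immediate: the divisorial valuation on $M_0$ corresponding to $\{u+v=0\}$ has uniformizer $u+v$ and formally real residue field $\R((s))(u,x,y)$, so the residue of $(u+v,-1,-1)$ is $(-1,-1)\neq 0$ there. The task is to lift this ramification to a divisorial valuation of $K_m$, which is the degree-$4$ algebraic extension of $M_0$ obtained by adjoining $z,w$ subject to (\ref{equ1}).

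The naive lift fails: on $\{u+v=0\}\cap \mathcal{Z}_m$, setting $v=-u$ in the two equations of (\ref{equ1}) and adding gives $x^2+y^2+z^2+w^2=0$. Hence in the residue field of the unique extension of $v_{\{u+v=0\}}$ to $K_m$, the element $-1$ is a sum of three squares; by Pfister's level theorem the level is then a power of $2$ no greater than $2$, so $-1$ is a sum of two squares and $(-1,-1)=0$. One must therefore take $\nu=v_{D_m}$ for an irreducible divisor $D_m$ on a suitable birational model $\tilde{\mathcal{Z}}_m\to\mathcal{Z}_m$ of the integral model cut out by (\ref{equ1}), with $D_m$ supported in the special fiber of $\tilde{\mathcal{Z}}_m\to \Spec\R[[s]]$. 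The model is chosen $m$-dependently, via a weighted blow-up or rescaling of the form $u=s^a\tilde u,\,v=s^a\tilde v,\ldots$ with an odd exponent $a$ adapted to $4m$ (e.g.\ $a=2m+1$), so that the identity $u+v=s^a(\tilde u+\tilde v)$ forces $v_{D_m}(u+v)$ to be odd along a suitable exceptional divisor.

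The main obstacle is to arrange simultaneously that $\kappa(D_m)$ is formally real: every ``obvious'' exceptional divisor in the special fiber of $\tilde{\mathcal{Z}}_m$ inherits from (\ref{equ1}) a Pfister-type identity such as $X^2+Y^2+Z^2+W^2=0$ or $X^2+Y^2+Z^2+U^2=0$, and Pfister's level theorem then again kills $(-1,-1)$ in the residue field. Overcoming this tension is the delicate part of the argument and is where the exponent $4m$ of $s$ in (\ref{equ1}) enters nontrivially: the blow-up center must be chosen so that $D_m$ dominates a portion of the special fiber on which $(u-1)(u^2+s^{4m})g(u)$ is positive (e.g.\ a region with $u>1$), forcing $D_m(\R)$ to be Zariski-dense in $D_m$ and hence $\kappa(D_m)$ to be formally real. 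Different values of $m$ require different centers, which matches the statement in the introduction that ``different divisors'' are used ``for different values of $n$''. Once $D_m$ with the above two properties is produced, the residue computation yields $\partial_{D_m}(u+v,-1,-1)=(-1,-1)\neq 0$ in $H^2(\kappa(D_m),\Z/2)$, establishing the proposition.
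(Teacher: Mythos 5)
Your overall strategy is correct and matches the paper's: find a divisorial valuation $\nu$ of $K_m$ (on a suitable $m$-dependent integral model over $\R[[s]]$) at which $v_\nu(u+v)$ is odd and $(-1,-1)\neq 0$ in $\Br(\kappa(\nu))$, so that $\partial_\nu(u+v,-1,-1)=(-1,-1)\neq 0$. You also correctly diagnose why the naive choice $\{u+v=0\}$ fails: the relation $x^2+y^2+z^2+w^2=0$ forces the residue field to have level $\leq 2$. And the intuition that an $m$-adapted rescaling with odd-parity considerations is needed is sound.

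The gap is that you never actually produce the divisor $D_m$, and the route you propose toward it is likely unworkable. You insist on making $\kappa(D_m)$ formally real, and suggest a \emph{symmetric} weighted rescaling $u=s^a\tilde u,\ v=s^a\tilde v$ with a single odd exponent $a$; the paper instead uses the \emph{asymmetric} rescaling $u=s^{2m}u'$, $x=s^{2m}x'$, $y=s^{2m}y'$, $z=s^{2m}z'$, $v=s^{2m-1}v'$, $w=s^{2m-1}w'$, under which the special fiber decomposes and the chosen component $D_1$ has equations $x'^2+y'^2+z'^2+u'^2+1=0$, $w'=v'$. This $D_1$ has \emph{no} real points and $\R(D_1)$ is \emph{not} formally real: it is stably birational to the anisotropic quadric threefold, whose function field has level exactly $4$ by Pfister, and level $4>2$ is precisely what guarantees $(-1,-1)\neq 0$. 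Your heuristic of chasing a region with $u>1$ to force Zariski-density of real points does not survive the rescaling (any $u=s^a\tilde u$ sends $u\to 0$ on the special fiber, so $u-1\to -1$), so it is not clear a formally real component exists at all. The missing idea is thus twofold: the asymmetric exponents $(2m,2m-1)$ that make $v_\nu(u+v)=v_\nu(s^{2m-1}(su'+v'))=2m-1$ odd along $D_1$, and the realization that a level-$4$ (rather than formally real) residue field suffices and is what actually arises.
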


\begin{proof}
Consider the $\R[[s]]$-scheme $Z$ defined by the equations 
\begin{equation}
\label{equ2}
\left\{ \begin{aligned} 
x'^2+y'^2+z'^2 &= (s^{2m}u'-1)(u'^2+1)\prod_{j=1}^e(js^{4m}u'^2+1)\\
w'^2 &= (s^{2m-1}v'+1)(v'^2+s^{2})\prod_{j=1}^e(js^{4m-2}v'^2+1),
\end{aligned} \right.
\end{equation}
The change of variables $x=s^{2m}x'$,  $y=s^{2m}y'$, $z=s^{2m}z'$, $u=s^{2m}u'$, $w=s^{2m-1}w'$ and $v=s^{2m-1}v'$ induces a birational isomorphism between (\ref{equ1}) and (\ref{equ2}).  In these new coordinates, the class $(u+v,-1,-1)$ is equal to $(s^{2m-1}(su'+v'),-1,-1)$.

The Cartier divisor $D:=\{s=0\}\subset Z$ is the real algebraic variety with equations
\begin{equation}
\label{eq3}
\left\{ \begin{aligned} 
x'^2+y'^2+z'^2+u'^2+1&=0 \\
w'^2-v'^2&=0.
\end{aligned} \right.
\end{equation}
It has two irreducible components that we denote by $D_1$ (on which $w'=v'$) and~$D_2$ (on which~${w'=-v'}$).  As $D$ is smooth at the generic point of $D_1$, the scheme~$Z$ is regular at the generic point of $D_1$.  Let $\omega$ be the discrete valuation of the function field of $Z$ associated with $D_1$, and let $\partial_{\omega}$ be the corresponding residue map.

Since $\omega(s^{2m-1}(su'+v'))=2m-1$ is odd,  one sees that 
\begin{equation}
\label{residue}
\partial_{\omega}((s^{2m-1}(su'+v'),-1,-1))=(-1,-1)\textrm{ in }H^2(\R(D_1),\Z/2).
\end{equation}
The class (\ref{residue}) vanishes if and only if $-1$ is a sum of $2$ squares in $\R(D_1)$ (see \cite[Propositions 1.3.2 and 4.7.1]{GS}).  As $D_1$ is stably birational to the real anisotropic quadric of dimension ~$3$, whose function field has level $4$ (e.g.\,by Pfister's work \cite[Satz 5]{Pfister}), one cannot write $-1$ as a sum of two squares in $\R(D_1)$. It follows that the residue (\ref{residue}) is nonzero. 
\end{proof}

We may now prove Theorem \ref{th1}.

\begin{proof}[Proof of Theorem \ref{th1}]
Let $W$ be the algebraic variety over $\R((t))$ defined by the system of equations
\begin{equation}
\label{equationsproduct}
\left\{ \begin{aligned} 
x^2+y^2+z^2 &= (u-1)(u^2+t)\prod_{j=1}^e(ju^2+1)\\
w^2 &= (v+1)(v^2+t)\prod_{j=1}^e(jv^2+1).
\end{aligned} \right.
\end{equation}
For all $m\geq 1$, applying Proposition \ref{propnr} with $s=t^{\frac{1}{4m}}$ shows that the cohomology class $(u+v,-1,-1)\in H^3(\R((t^{\frac{1}{4m}}))(W),\Z/2)$ is not unramified over~$\R$, and hence is nonzero.  It follows that the class $(u+v,-1,-1)\in H^3(R(W),\Z/2)$ is also nonzero.

By \cite[Th\'eor\`emes 3.3 et 3.4]{CTP} applied with $k=R$ and $a=b=-1$ (after a change of coordinates $u\mapsto u+1$ and $v\mapsto v-1$),  the class $(u+v,-1,-1)$ belongs to $H^3_{\nr}(R(W)/R,\Z/2)$ and its nonvanishing is equivalent to the smooth projective models of the variety defined by (\ref{eqR}) not being universally $\CH_0$-trivial. Since we have shown that this class is nonzero, the proof is complete.
\end{proof}

\begin{rem}
The above proof of Theorem \ref{th1} relies on \cite[Th\'eo\-r\`eme~3.3]{CTP}, and through it on deep results of Colliot-Th\'el\`ene and Skorobogatov about Chow groups of zero-cycles on quadric bundles \cite{CTSk}. We now explain how to conclude in a more elementary way once we know that the cohomology  class~${\alpha:=(u+v,-1,-1)}$ is unramified over $R$ (by \cite[Th\'eo\-r\`eme~3.3]{CTP}) and nonzero (by Proposition \ref{propnr}).

Let $X$ and $Y$ be the varieties over $R$ respectively defined by the first and the second equation of (\ref{equationsproduct}), so $W_R=X\times_R Y$.  
Assume for contradiction that the variety $X$ is $R$\nobreakdash-rational. 
Set $F:=R(Y)$. Since $\alpha\in H^3_{\nr}(R(W)/R,\Z/2)$, one has a fortiori $\alpha\in H^3_{\nr}(F(X)/F,\Z/2)$. It therefore follows from \cite[Proposition 1.2]{CTO} that $\alpha$ is obtained by restriction from a class $\beta\in H^3(F,\Z/2)$.

Let $o\in X(R)$ be a point with coordinates $(x_0,y_0,z_0,u_0)$ such that $u_0\geq 1$. One has~$\beta=\alpha|_{o}=(u_0+v,-1,-1)$ in $H^3(F,\Z/2)$ (where $o$ is now viewed as an $F$-point of $X$).  The equation of $Y$ shows that $v+1\in R(Y)$ is nonnegative, and hence a sum of two squares in $R(Y)$ by \cite[Theorem 4.1]{Knebusch}. As $u_0-1\geq 0$ is a square in $R$, the element $u_0+v\in R(Y)$ is a sum of three squares in $R(Y)$. In particular, the Pfister form $\langle\langle u_0+v,-1,-1\rangle\rangle$ is isotropic. It therefore follows from a result of Arason \cite[Satz 1.6]{Arason} that $\beta=(u_0+v,-1,-1)=0$, so that $\alpha=0$. This is the required contradiction.
\end{rem}

\subsection{Consequences over the reals}
\label{parimplicationR}

Fix an even integer $d\geq 2$.  Let $B$ be the algebraic variety over $\R$ parametrizing all the separable polynomials $p\in\R[u]$ of degree $d$. Let $\cX\subset \A^4_{\R}\times_{\R}B$ be the real algebraic variety with equation
$$x^2+y^2+z^2=u\cdot p(u),$$
where $(x,y,z,u)$ are the coordinates of $\A^4_{\R}$.
The first projection $\pi:\cX\to B$ is the universal family of algebraic varieties with an equation of the form (\ref{eq1}).

\begin{prop}
\label{propreal}
The following data do not exist:
\begin{enumerate}[label=(\roman*)]
\item a morphism $f:B'\to B$ of algebraic varieties over $\R$ such that $f(B'(\R))$  contains all nonnegative polynomials $p\in B(\R)$;
\item an open subset $\cU\subset\cX':=\cX\times_BB'$ intersecting all the fibers of the base change $\pi':\cX'\to B'$ of $\pi$ by $f$;
\item an open subset $\cV\subset \A^3_{\R}\times_{\R}B'$ and an isomorphism $\cU\isoto\cV$ of varieties over $B'$.
\end{enumerate}
\end{prop}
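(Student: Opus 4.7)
The plan is by contradiction: suppose $(f, \cU, \cV)$ as in (i)--(iii) exist, and we derive $R$-rationality of the nonrational variety furnished by Theorem~\ref{th1}. Set $e := (d-2)/2 \geq 0$. By Remark~\ref{remexample}(i), equation (\ref{eqR}) for this $e$ is, after the translation $u \mapsto u+1$, an instance of (\ref{eq1}); write the corresponding degree-$d$ polynomial as $p_t \in R[u]$ and the resulting variety over $R$ as $X_{p_t}$. By Theorem~\ref{th1}, $X_{p_t}$ is not $R$-rational. Regarding $t$ as a free parameter, the same formulas define a morphism $\gamma \colon \A^1_\R \dashrightarrow B$, $\epsilon \mapsto p_\epsilon$, and for every sufficiently small $\epsilon \in \R_{>0}$ the polynomial $p_\epsilon$ is nonnegative separable of degree $d$, hence lies in $f(B'(\R))$ by hypothesis~(i).

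The key step is to lift the $R$-point $p_t \in B(R)$ to an $R$-point $b \in B'(R)$ with $f(b) = p_t$. Form the fiber product $C := B' \times_B \A^1_\R$ along $\gamma$, with projection $\pi_2 \colon C \to \A^1_\R$. By the previous observation, $\pi_2(C(\R)) \supset (0, \delta)$ for some $\delta > 0$. Since $C$ has finitely many irreducible components and $(0, \delta)$ cannot be covered by a finite union of semialgebraic sets none of which contains an open subinterval, some component $C_0$ satisfies $\pi_2(C_0(\R)) \supset I$ for some nonempty open subinterval $I \subset (0, \delta)$. Taking repeated general hyperplane sections through a chosen real point of $C_0$ lying over $I$, we reduce to the case where $C_0$ is an irreducible curve with $\pi_2$ nonconstant; let $\bar D$ be the normalization of its projective closure, with extended morphism $\bar\pi_2 \colon \bar D \to \P^1_\R$.

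Since $\bar D(\R)$ is compact, a sequence of real points in $\bar D$ with $\bar\pi_2$ tending to $0^+$ along one branch over $I$ has a real accumulation point $q \in \bar D(\R)$ with $\bar\pi_2(q) = 0$. The completed local ring $\widehat{\cO}_{\bar D, q}$ is isomorphic to $\R[[s]]$ for some uniformizer $s$, and the pulled-back function $\epsilon$ can be written as $s^k u(s)$ with $k \geq 1$ and $u \in \R[[s]]^\times$; positivity of $\epsilon$ on the chosen branch forces $u(0) > 0$ (changing $s$ to $-s$ if needed). Hensel's lemma then yields $u^{1/k} \in \R[[s]]^\times$, so $\tau := s\,u(s)^{1/k}$ is a uniformizer with $\tau^k = \epsilon$, identifying $\R[[s]] = \R[[\epsilon^{1/k}]]$. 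The induced rational map $\bar D \dashrightarrow B'$ is regular at the generic point, so composition with the formal arc at $q$ produces an element of $B'(\R((\epsilon^{1/k})))$ whose image in $B$ is $p_\epsilon$. Substituting $\epsilon = t$ and using $\R((t^{1/k})) \subset R$ gives the desired point $b \in B'(R)$ with $f(b) = p_t$.

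Finally, pulling back (ii) and (iii) along $b$: the open subset $\cU_b \subset (\cX')_b = X_{p_t}$ is nonempty by (ii), and by (iii) it is $R$-isomorphic to $\cV_b \subset \A^3_R$, an open subset of affine $3$-space. Hence $X_{p_t}$ is $R$-rational, contradicting Theorem~\ref{th1}. The main obstacle is the lifting step in paragraphs two and three: producing an algebraic $R$-point of $B'$ from the real-analytic information provided by (i) requires both the pigeonhole reduction to a single component of $C$ and the Puiseux-type local analysis at a real boundary point $q$.
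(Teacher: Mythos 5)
Your overall strategy is sound and genuinely different from the paper's. Where the paper invokes the Tarski--Seidenberg transfer principle in one line --- the set of nonnegative $p\in B$ not lying in the image of $f$ on rational points is a semialgebraic set defined over $\R$, so its nonemptiness transfers between $R$ and $\R$ --- you attempt to carry out this transfer by hand, producing an explicit $R$-point of $B'$ mapping to $p_t$ via a Puiseux expansion of an algebraic arc through the one-parameter family $\epsilon\mapsto p_\epsilon$. The later steps of your proof (the accumulation point, the Hensel/Puiseux analysis, the passage from $B'(\R((t^{1/k})))$ to $B'(R)$, and the final derivation that $X_{p_t}$ would be $R$-rational) are all correct.

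There is, however, a concrete gap in the reduction to a curve. After identifying a component $C_0$ of $B'\times_B\A^1_\R$ with $\pi_2(C_0(\R))\supset(0,\delta')$, you take ``repeated general hyperplane sections through a chosen real point of $C_0$ lying over $I$''. This does not do what you need: a general hyperplane section through a fixed real point $c_0$ with $\pi_2(c_0)=\epsilon_0>0$ produces a curve on which $\pi_2$ covers only a real neighborhood of $\epsilon_0$, which in general stays away from $0$. For instance, if $C_0=\{x^2+y^2=\epsilon\}\subset\A^3_\R$ with $\pi_2$ the $\epsilon$-coordinate, then the real locus of the curve cut out by a generic hyperplane through a real point over $\epsilon_0$ is an ellipse whose $\epsilon$-image is a compact interval bounded away from $0$. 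When the $\epsilon$-image of the curve does not accumulate at $0$, your next step --- finding a sequence of real points of $\bar D$ with $\bar\pi_2\to 0^+$ --- fails, and the Puiseux argument has nothing to work with. The fix is to drop the hyperplane sections and instead invoke the semialgebraic curve selection lemma (see \cite{BCR}): since $0$ lies in the closure of the semialgebraic set $\pi_2(C_0(\R))$, there is a continuous semialgebraic arc $\gamma:[0,1]\to\overline{C_0(\R)}$ with $\pi_2(\gamma(0))=0$ and $\gamma((0,1])\subset C_0(\R)$; the Zariski closure of $\gamma((0,1])$ is an irreducible algebraic curve in $C_0$ to which your normalization and Puiseux analysis apply. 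With this replacement, your argument becomes a valid, if considerably longer, alternative to the paper's one-line appeal to Tarski--Seidenberg.
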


\begin{proof}
Assume that such data exist.  Let $R$ be the real closed field $\cup_{n\geq 1}\R((t^{\frac{1}{n}}))$. By Theorem \ref{th1},  there exists a nonnegative separable polynomial $p_0\in R[u]$ of degree $d$ such that the variety over $R$ with equation $x^2+y^2+z^2=u\cdot p_0(u)$ is not $R$-rational.
It follows from (ii) and (iii) that $p_0\in B(R)$ cannot belong to $f(B'(R))$.  We deduce from the Tarski--Seidenberg transfer principle (see e.g. \cite[Proposition 5.2.3]{BCR}) that there exists a nonnegative $p\in B(\R)$ that does not belong to $f(B'(\R))$. This contradicts (i).
\end{proof}

Proposition \ref{propreal} means that a single rationality construction (or finitely many of them) would not suffice to show the $\R$-rationality of all algebraic varieties~$X$ over~$\R$ with equation (\ref{eq1}) for a fixed $d$.  In particular, it implies the following.

\begin{cor}
\label{correal}
Fix $d\geq 2$ even and $\delta\geq 0$.  There exists a variety $X$ over $\R$ with equation (\ref{eq1}) for this value of $d$, such that there is no birational map~$X\dashrightarrow\A^3_{\R}$ given by rational functions of degrees $\leq\delta$ in the variables~$(x,y,z,u)$.
\end{cor}

\begin{proof}
Assume for contradiction that no such $X$ exists.
Let $V$ be the algebraic variety over $\R$ parameterizing $7$-tuples $(a_1,b_1,a_2,b_2,a_3,b_3,p)$, where the $a_i$ and the~$b_i$ are elements of $\R[x,y,z,u]$ of degree $\leq\delta$, and $p\in\R[u]$ is separable of degree $d$. The subset of~$V$ parameterizing those $(a_1,b_1,a_2,b_2,a_3,b_3,p)$ such that~$(\frac{a_1}{b_1},\frac{a_2}{b_2}, \frac{a_3}{b_3})$ induces a well-defined birational map between $\{x^2+y^2+z^2=u\cdot p(u)\}$ and the affine space of dimension $3$ is constructible. It can thus be written as the union of finitely many locally closed algebraic subvarieties $(V_i)_{i\in I}$ of $V$. Let~$B'$ be the disjoint union of the $(V_i)_{i\in I}$. The morphism $f:B'\to B$ given by~${f(a_1,b_1,a_2,b_2,a_3,b_3,p)=p}$ is such that $f(B'(\R))$ contains all nonnegative polynomials $p\in B(\R)$, by our assumption. Condition (i) of Proposition \ref{propreal} therefore holds.

In addition, choosing $\cU$ and $\cV$ to be open subsets on which the birational map $(\frac{a_1}{b_1},\frac{a_2}{b_2}, \frac{a_3}{b_3})$ induces an isomorphism, one verifies that conditions (ii) and (iii) of Proposition \ref{propreal} also hold. Proposition \ref{propreal} now gives the required contradiction.
\end{proof}

\section{Rationality of some degree $4$ conic bundles}
\label{sec2}

In this section, we prove Theorem \ref{th2intro}.
Let $R$ be a real closed field and let $$C:=\{f(v,w,z)=0\}$$  be a geometrically irreducible quartic nodal curve in $\mathbb P^2_R$ whose normalization~$\Delta$ is of genus zero. We will write  $f(v,w)=0$ (resp. $f(v,z)=0$, $f(w,z)=0$) for the equation of $C$ in the affine chart $z=1$ (resp. $w=1$, $v=1$). We first have an elementary lemma:

\begin{lem}\label{elem1}
The curve $C$ has three nodes over the algebraic closure of $R$, at least one of the nodes is defined over $R$. Up to a linear change of variables, one may assume that the point $[0:0:1]$ is a node of $C$, and that the quartic $f(v,w,z)$ is given by the equation
$$f(v,w,z) = q(v,w)z^2+c(v,w)z+g(v,w),$$ 
where the homogeneous forms $q,c,g\in R[v,w]$ are of degrees $2,3,4$ respectively.
In addition,
\begin{enumerate}[label=\textrm{(\alph*)}]
\item If all three nodes are defined over $R$, up to a linear change of variables, one may assume that the points $[1:0:0]$, $[0:1:0]$, and $[0:0:1]$ are the nodes of $C$, and that the quartic $f(v,w,z)$ is given by the equation
\begin{equation}\label{f1}
f(v,w, z)=\epsilon v^2w^2-a_1v^2z^2-a_2w^2z^2+vwz(bv+cw+dz),
\end{equation}
where $\epsilon=\pm 1$, and $a_1, a_2, b,c,d\in R.$
\item If only one node is defined over $R$, up to a linear change of variables, one may assume that the points $[i:1:0]$, $[-i:1:0]$, and $[0:0:1]$ are the nodes of $C$, and that the quartic $f(v,w,z)$ is given by the equation
\begin{equation}\label{f2}
f(v,w, z)=\epsilon(v^2+w^2)^2+(a_1vz+a_2wz)(v^2+w^2)+bv^2z^2+cw^2z^2+dvwz^2,
\end{equation}
where $\epsilon=\pm 1$, and $ a_1, a_2, b,c,d\in R.$
\end{enumerate}
\end{lem}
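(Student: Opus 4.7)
The plan is to (i) count the nodes of $C$ and produce an $R$-rational one, (ii) exhibit the decomposition $f=q z^2+c z+g$ by placing that node at $[0:0:1]$, and (iii) handle cases (a) and (b) by transporting the remaining two nodes to standard positions over $R$. The only step that is not pure linear algebra is the reduction of the anisotropic binary quadratic form in case (b) to $v^2+w^2$, which I expect to be the main potential obstacle (though a standard one over real closed fields); everything else is vanishing-order bookkeeping in homogeneous coordinates.

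For step (i), the genus--degree formula gives arithmetic genus $p_a(C)=3$; since $C$ is nodal with geometric genus $0$, it has exactly three nodes over $\overline{R}$. The singular locus is defined over $R$, so its three geometric points carry an action of $\Gal(\overline{R}/R)=\Z/2$, and odd cardinality forces a fixed point, producing an $R$-rational node. For step (ii), after moving this node to $[0:0:1]$ by an $R$-linear change and writing $f=\sum a_{ijk}v^iw^jz^k$, the local node conditions at $[0:0:1]$ read $a_{004}=a_{103}=a_{013}=0$, which is exactly $f=q(v,w)z^2+c(v,w)z+g(v,w)$ with $\deg q=2$, $\deg c=3$, $\deg g=4$.

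For case (a), I would first observe that the three nodes are not collinear: a line $L$ through all three would satisfy $(L\cdot C)_P\geq 2$ at each node, totalling $\geq 6$, whereas B\'ezout bounds $L\cdot C$ by $4$ unless $L$ is a component of $C$, which is ruled out by irreducibility. Hence the three $R$-rational nodes can be moved to $[1:0:0]$, $[0:1:0]$, $[0:0:1]$. The three singularity conditions then kill every monomial of $f$ in which some variable appears with exponent $\geq 3$, leaving exactly the six monomials of (\ref{f1}). The coefficient $\epsilon$ of $v^2w^2$ must be nonzero, for otherwise $z\mid f$ and $\{z=0\}$ splits off as a component of $C$; scaling $v$, $w$, $z$ and rescaling $f$ then normalizes $\epsilon$ to $\pm 1$.

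For case (b), the two non-rational nodes form a $\Gal(\overline{R}/R)$-orbit $\{P,\bar P\}$, and the line $\ell$ they span is defined over $R$; B\'ezout again forces $\ell$ to avoid the rational node. Moving the rational node to $[0:0:1]$ and $\ell$ to $\{z=0\}$, the pair $\{P,\bar P\}\subset \ell=\P^1_R$ is the zero set of an anisotropic binary quadratic form over $R$, which over the real closed field $R$ is equivalent to $v^2+w^2$ after completing the square and rescaling, placing the pair at $[\pm i:1:0]$. The node conditions at $[i:1:0]$ together with their Galois conjugates then translate into $(v^2+w^2)^2\mid g$ and $(v^2+w^2)\mid c$, while $q$ remains unconstrained among degree-$2$ forms; writing these divisibilities explicitly produces (\ref{f2}), with $\epsilon$ nonvanishing (else $z\mid f$) and normalized to $\pm 1$ exactly as in case (a).
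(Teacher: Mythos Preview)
Your argument is correct and follows essentially the same route as the paper: count nodes via the genus formula, use the odd Galois orbit to find an $R$-rational node, rule out collinearity by B\'ezout, transport the nodes to standard positions, and read off the monomial constraints. The only differences are cosmetic: in~(a) you show directly that the $v^2w^2$ coefficient is nonzero (since otherwise $z\mid f$), whereas the paper argues that one of $v^2w^2$, $v^2z^2$, $w^2z^2$ survives and then permutes; in~(b) you phrase the node conditions as the divisibilities $(v^2+w^2)^2\mid g$ and $(v^2+w^2)\mid c$, whereas the paper carries out the partial-derivative computation coefficient by coefficient.
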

\begin{proof}
By the genus formula, since the genus of $\Delta$ is zero and the degree $d$ of $f$ is $4$, the curve $C$ has $\frac{(d-1)(d-2)}{2}=3$ nodes. Note that the nodes are not collinear since the degree of $f$ is $4$, by B\'ezout's theorem (as the intersection of $C$ with a line at a node of $C$ is counted with multiplicity $2$). In particular, one node must be defined over $R$, so that we may assume it is the point $[0:0:1]$; the two other nodes are either defined over~$R$, or they are complex conjugate.  

In the former case, we can assume that the two other nodes are $[1:0:0]$ and~$[0:1:0]$. Hence from the partial derivatives condition  the degree of $f$ in each of the variables $v,w,z$ is at most $2$. In addition, since $C$ is irreducible, the coefficient of one of the terms $v^2w^2$, $v^2z^2$, or $w^2z^2$  must be nonzero, so that, up to a permutation of variables and scaling, one may assume that the coefficient of~$v^2w^2$ is $\pm 1$, and we get the equation (\ref{f1}).

In the latter case, we can assume that the two other nodes are $[i:1:0]$ and $[-i:1:0]$.
We obtain the form (\ref{f2}) from the partial derivatives condition again. First, since $[0:0:1]$ is a node of $C$, we can write
$f=f_4+f_3+f_2$, where $$f_4=a_4v^4+b_4w^4+c_4v^3w+d_4vw^3+e_4v^2w^2,$$ $$f_3=a_3v^3z+b_3w^3z+c_3v^2wz+d_3vw^2z,$$ and $f_2=bv^2z^2+cw^2z^2+dvwz^2$.

Then the condition that $[i:1:0]\in C$ reads as $a_4+b_4=e_4, c_4=d_4$.
Conditions that $\partial f/\partial v$ and $\partial f/\partial w$ vanish at $[i:1:0]$ give $c_4=d_4=0$ and $e_4=2a_4=2b_4$.
Condition that $\partial f/\partial z[i:1:0]=0$ gives $a_3=d_3$ and $b_3=c_3$.
Up to a scaling, we may assume $a_4=\pm 1$, hence we get the form (\ref{f2}) as claimed with $a_1\stackrel{def}{=}a_3=d_3$ and $b_1\stackrel{def}{=}b_3=d_3$.
\end{proof}

\begin{thm}
\label{th2}
The algebraic varieties defined by (\ref{eq2}) with $d=4$
 are always rational, over any real closed field.

\end{thm}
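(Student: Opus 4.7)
Plan: I will split the proof according to the two cases of Lemma \ref{elem1} and, in each, exhibit an explicit birational $R$-isomorphism from the threefold $X: x^2 + y^2 = f(v,w)$ to an affine quadric threefold over $R$. The theorem will then follow, since a quadric threefold over $R$ with a smooth $R$-rational point is $R$-rational via stereographic projection from that point.

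In case (a), using the normal form (\ref{f1}), I apply the Cremona map $(x,y,v,w) \mapsto (X,Y,s,t) := (x/(vw), y/(vw), 1/v, 1/w)$. Multiplying the equation $x^2+y^2 = f(v,w)$ by $s^2 t^2$ clears denominators and yields
\[ Q_a:\quad X^2 + Y^2 + a_2 s^2 - dst + a_1 t^2 - cs - bt = \epsilon. \]
In case (b), using the normal form (\ref{f2}), I apply the ``complex inversion'' $v + iw \mapsto 1/(v'+iw')$ at the conjugate pair of nodes $[\pm i : 1 : 0]$: explicitly, $v = v'/(v'^2 + w'^2)$, $w = -w'/(v'^2+w'^2)$, $X = (v'^2+w'^2)\, x$, $Y = (v'^2+w'^2)\, y$. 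Multiplying by $(v'^2+w'^2)^2$ transforms the equation into
\[ Q_b:\quad X^2 + Y^2 - b v'^2 + d v' w' - c w'^2 - a_1 v' + a_2 w' = \epsilon. \]

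It remains to produce a smooth $R$-point on the relevant quadric. The transformations above are $R$-isomorphisms over the open set $\{vw \neq 0\}$ in case (a) and $\{v^2+w^2 \neq 0\}$ in case (b), and any point $(x_0, y_0, v_0, w_0) \in X(R)$ with $f(v_0, w_0) > 0$ is smooth on $X$; so it suffices to show that the open set $\{f > 0\} \subset \A^2_R(R)$ is nonempty and meets the locus where the map is defined. Under hypothesis (\ref{eqb}), $f$ is nonnegative and not identically zero, so $\{f > 0\}$ is open and nonempty. Under hypothesis (\ref{eqa}), the existence of an $R$-point of $\Delta$ yields a real point of $C$ across which $f$ (vanishing to order one along the irreducible curve $C$) changes sign, so again $\{f > 0\}$ is nonempty. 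As $\{vw = 0\}$ and $\{v^2 + w^2 = 0\}$ are semialgebraic of dimension $\leq 1$, the intersection $\{f > 0\} \cap \{vw \neq 0\}$ (resp.\ $\cap \{v^2+w^2 \neq 0\}$) is a nonempty open set. Any such point gives a smooth $R$-point of $Q_a$ (resp.\ $Q_b$), since the partial derivatives with respect to $X$ and $Y$ are $2X$ and $2Y$, not both zero when $f(v_0, w_0) > 0$.

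The central insight is the discovery of the right birational transformation: dividing by the unique $R$-defined quartic form vanishing (with multiplicity two) on the two nodes of $C$ lying on the line at infinity -- namely $(vw)^2$ in case (a) and $(v^2 + w^2)^2$ in case (b) -- collapses the quartic $f$ to a polynomial of degree $\leq 2$ in the new variables, producing the quadric threefold. Once the normal forms of Lemma \ref{elem1} and the substitutions above are in hand, the remaining verifications are direct computations; the main subtlety is the parallel between cases (a) and (b), in which the splitting or non-splitting of the Galois-stable pair of nodes at infinity dictates which ``inversion'' one must use.
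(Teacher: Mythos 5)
Your proof is correct, and it takes a genuinely different route from the paper. The paper's argument treats $X$ as a quadric surface bundle over a rational curve: in case \ref{eqb} it uses the ``generic'' normal form $f=q(w)z^2+c(w)z+g(w)$ from Lemma~\ref{elem1} and shows the generic fiber of $X\to\P^1_w$ has a smooth $R(w)$-point; in case \ref{eqa} it fibers over $\A^1_v$, performs several explicit changes of variables, and reduces to the auxiliary Lemma~\ref{elem2} about a special class of quadric surface bundles $w^2+q_1x^2+q_1y^2+q_2t^2=0$. Your proof instead birationally collapses the whole threefold onto an affine quadric threefold in one step, via the degree-$2$ Cremona transformation of $\P^2$ centered at the three nodes of $C$ (written affinely as $(v,w)\mapsto(1/v,1/w)$, or its Galois-twisted variant $(v+iw)\mapsto 1/(v'+iw')$ when the pair of nodes at infinity is conjugate), together with the matching rescaling of $x,y$ by $vw$ (resp.\ $v'^2+w'^2$). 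Since the standard Cremona transformation drops the degree of $C$ from $4$ to $2\cdot 4-3\cdot 2=2$, the conic bundle degenerates over a conic in the new coordinates and hence becomes a quadric threefold; irreducibility of $Q_a$, $Q_b$ is automatic (they are birational to $X$ and not contained in the exceptional loci), and your argument that $\{f>0\}$ is nonempty and meets the domain of the Cremona map correctly produces a smooth $R$-point, after which stereographic projection concludes. The upshot is a shorter, more geometric argument that uses only Lemma~\ref{elem1} and dispenses with Lemma~\ref{elem2} and the iterated completions of squares in the paper's cases (b) and (c), and it makes transparent the role played by the Galois-stable pair of nodes at infinity; the paper's route, by contrast, keeps the quadric-surface-bundle structure visible throughout and isolates in Lemma~\ref{elem2} the exact algebraic condition on $(q_1,q_2)$ that makes the fibration rational.
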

\begin{proof}

\begin{enumerate}[label=\textrm{(\alph*)}]
\item We first consider equation \ref{eqb}. By Lemma \ref{elem1}, one can assume that the affine equation of $C$ is 
$$f(w,z)=q(w)z^2+c(w)z+g(w),$$ where the polynomials $q,c,g\in R[w]$ are of degrees at most $2,3,4$ respectively, so that the conic bundle of type~\ref{eqb} is given by the affine equation
$$X:=\{x^2+y^2=f(w,z)\}.$$ 
In particular, $X$ is birational to a quadric surface bundle $\pi:X'\to \mathbb P^1_R$, with generic fiber the irreducible quadric $$Q:=\{x^2+y^2-q(w)z^2-c(w)zt-g(w)t^2=0\}\subset \mathbb P^3_{R(w)}.$$
Since $f(w,z)$ is assumed to be nonnegative, the quadric~$Q$ has a smooth point over~$R(w)$: indeed, taking~$z=z_0\in R$ nonzero and $t=1$, we have that $q(w)z_0^2+c(w)z_0+g(w)\in R[w]$ is nonzero and nonnegative, hence it is a nontrivial sum of two squares in $R[w]$ (it is a product of nonnegative factors of degree $2$; use that each factor is a sum of two squares and that being a sum of two squares is multiplicative). It follows that $X$ is rational.
\item We now consider  equation \ref{eqa}, in the case when all the nodes of $C$ are defined over $R$, so that $C$ is defined by equation (\ref{f1}) from Lemma \ref{elem1}.  Then the conic bundle of type~\ref{eqa} is given by the affine equation
$$X:=\{x^2+y^2=\epsilon v^2w^2-a_1v^2-a_2w^2+vw(bv+cw+d)\}.$$
In particular, $X$ is birational to a quadric surface bundle $X'$ over $\mathbb A^1_v$ given by the equation: 
$$X':=\{x^2+y^2+a_1v^2t^2=\epsilon w^2 (v^2+\epsilon cv-\epsilon a_2)+wt(bv^2+dv)\}.$$

Note that $a_1\neq 0$ and $a_2\neq 0$ since $C$ is irreducible (otherwise the equation of $C$ is divisible by $w$, resp. by $v$).

With the change of variables $w_1=w+\epsilon t\frac{bv^2+dv}{2(v^2+\epsilon cv -\epsilon a_2)}$, $b_1=b/2, d_1=d/2$ we obtain that $X$ is birational to a quadric surface bundle
$$x^2+y^2+a_1v^2t^2+\epsilon v^2t^2\frac{(b_1v+d_1)^2}{v^2+\epsilon cv-\epsilon a_2}=\epsilon (v^2+\epsilon cv-\epsilon a_2)w_1^2.$$
Multiplying by $q(v)=v^2+\epsilon cv-\epsilon a_2$ and setting $w_2=q(v)w_1$, $t_1=vt$ shows that $X$ is birational to a quadric surface bundle
$$
q(v)x^2+q(v)y^2+(a_1q(v)+\epsilon (b_1v+d_1)^2)t_1^2=\epsilon w_2^2.
$$
Multiplying by $\epsilon$ and setting $q_1(v)=-\epsilon q(v)$, $q_2(v)=a_1q_1(v)-(b_1v+d_1)^2$, we see that $X$ is birational to the quadric surface bundle
$$
w_2^2+ q_1(v)x^2 +q_1(v) y^2+q_2(v)t_1^2=0.
$$
We deduce that $X$ is rational by Lemma \ref{elem2} below (note that any $R$-point of $X$ in a fiber over $(v,w)\in R^2$ with $f(v,w)>0$ is smooth).
\item  We now consider  equation \ref{eqa}, in the case when only one  node of $C$ is defined over $R$, so that $C$ is defined by equation (\ref{f2}) from Lemma \ref{elem1}.  Then the conic bundle $X$ of type~\ref{eqa} is given by the affine equation
$$x^2+y^2=\epsilon (v^2+1)^2+z(a_1v+a_2)(v^2+1) + z^2(bv^2+dv+c).$$
Let $q(v)=bv^2+dv+c$ and let $z_1=zq(v)$. Note that $q(v)$ is nonzero since $C$ is irreducible. Multiplying by $q(v)$, we see that $X$ is birational to a quadric surface bundle over $\mathbb A^1_v$ given by the equation
$$q(v)x^2+q(v)y^2=z_1^2+z_1(a_1v+a_2)(v^2+1)t+\epsilon q(v)(v^2+1)^2t^2.$$
Changing  variables to $t_1=(v^2+1)t$, $z_2=z_1+\frac{(a_1v+a_2)(v^2+1)t}{2}$, $X$ is birational to the quadric surface bundle
$$q(v)x^2+q(v)y^2+(-\epsilon q(v)+\frac{(a_1v+a_2)^2}{4})t_1^2=z_2^2.$$
Setting $q_1(v)=-q(v), q_2(v)=-\epsilon q_1(v) -\frac{(a_1v+a_2)^2}{4}$,  and applying Lemma~\ref{elem2} to the above equation (again, any $R$-point of $X$ in a fiber over $(v,w)\in R^2$ with $f(v,w)>0$ is smooth),  we get $X$ is rational in this case as well. \qedhere
\end{enumerate}
\end{proof}

\begin{lem}
\label{elem2}
Let $R$ be a real closed field. Let $q_1(v), q_2(v)\in R[v]$ be polynomials of degree $\leq 2$ with
$q_1(v)$ nonzero and $q_2(v)=aq_1(v)-l_3(v)^2$ for some $a\in R$ and some polynomial $l_3(v)\in R[v]$ of degree $\leq 1$.
Let $X$ be the quadric surface bundle
$$X:= \{w^2+q_1(v)x^2+q_1(v)y^2+q_2(v)t^2=0\}\subset \mathbb P^3_{[w:x:y:t]}\times \mathbb A^1_v.$$
If $X$ has a smooth $R$-point, then $X$ is rational. 
\end{lem}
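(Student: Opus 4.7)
The plan is to show $R$-rationality by exhibiting an explicit birational parametrization of an open subset of $X$ by an affine space, leveraging the algebraic identity $q_2 = a q_1 - l_3^2$ to produce a useful factorization of the defining equation.

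Setting $N(x,y,t) := x^2+y^2+at^2$, the equation rewrites as
\[
w^2 - l_3(v)^2 t^2 = -q_1(v)\, N, \qquad \text{i.e.,}\qquad (l_3(v)t - w)(l_3(v)t + w) = q_1(v)\, N.
\]
The plan is to introduce an auxiliary parameter $s$ by setting $l_3(v)t - w = s$ and $l_3(v)t + w = q_1(v)N/s$. Adding and subtracting expresses $w$ and $l_3(v)t$ as rational functions of $(s, v, x, y, t)$:
\[
w = \frac{q_1(v) N - s^2}{2s}, \qquad l_3(v)\, t = \frac{s^2 + q_1(v) N}{2s}.
\]
Writing $l_3(v) = \alpha v + \beta$ and $q_1(v) = \gamma v^2 + \delta v + \epsilon$, the second identity becomes the polynomial equation
\[
\gamma N\, v^2 + (\delta N - 2st\alpha)\, v + (\epsilon N + s^2 - 2st\beta) = 0
\]
of degree $\leq 2$ in $v$. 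When $\gamma = 0$ (i.e., $\deg q_1 \leq 1$), this is linear in $v$ and yields $v \in R(s,x,y,t)$ directly, producing a birational map $\mathbb{A}^3_R \dashrightarrow X$ (say, after normalizing $t=1$). When $q_1$ has degree $2$ but factors over $R$ as $q_1 = \gamma(v-\rho_1)(v-\rho_2)$, one adapts the parametrization by setting instead $l_3(v)t - w = s(v-\rho_1)$ and $l_3(v)t + w = \gamma(v-\rho_2)N/s$, which again produces an equation linear in $v$.

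The delicate case is when $q_1$ is sign-definite over $R$ (hence has no real roots), so neither of the above tricks applies verbatim. This is precisely where the smooth $R$-point hypothesis becomes essential. The idea is to use $P_0 = (v_0; w_0, x_0, y_0, t_0) \in X(R)$ to perform an $R$-linear change of the projective coordinates $[w:x:y:t]$ (combined, if necessary, with a translation in $v$) that preserves the structure $q_2 = aq_1 - l_3^2$ of the equation while moving $P_0$ into a distinguished position; the tangent data at the smooth point $P_0$ supplies the direction needed to pick this change of coordinates. After this transformation, one verifies that the new polynomial playing the role of $q_1$ either factors over $R$ or satisfies the discriminant condition that makes the parametrization above successful. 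Either way, one obtains $v \in R(s,x,y,t)$ rationally, $w$ is then determined rationally, and the resulting birational map from $\mathbb{A}^3_R$ to $X$ establishes $R$-rationality.

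The principal obstacle is the sign-definite case: in all other cases the explicit parametrization works without invoking any hypothesis on $R$-points, while in this case the smooth $R$-point is exactly the input that lets one reduce to a situation where the parametrization succeeds.
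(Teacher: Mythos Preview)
Your parametrization via the auxiliary parameter $s$ is a nice idea and genuinely handles the cases where $q_1$ has degree~$\leq 1$ or factors over~$R$ into linear factors; this is a different (and arguably slicker) route than the paper's case~(b), which instead normalizes $q_1$ to $ur$, completes the square, and factors a difference of squares.

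However, your treatment of the sign-definite case is not a proof. You assert that a suitable $R$-linear change of $[w:x:y:t]$ (plus a translation in~$v$) preserving the shape $q_2=aq_1-l_3^2$ will move the smooth $R$-point into a position where ``the new $q_1$ either factors over~$R$ or satisfies the discriminant condition.'' But the only linear changes that preserve the form $w^2+q_1(x^2+y^2)+q_2t^2$ are essentially rotations in $(x,y)$ and scalings, none of which alter $q_1$ at all; any change that mixes $w$ and $t$ introduces a $wt$ cross term and destroys the structure. So there is no mechanism here by which $q_1$ could stop being sign-definite, and your quadratic in~$v$ (with discriminant whose $N^2$-coefficient is $\delta^2-4\gamma\epsilon<0$) will not acquire a rational root. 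The gap is real, not cosmetic.

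The paper's argument for this case is short and uses a completely different idea you are missing: since $R$ is real closed and $q_1$ does not change sign, one writes $q_1=\epsilon(l_1^2+l_2^2)$ with $l_1,l_2$ linear. Then the substitution $x_1=xl_1+yl_2$, $y_1=xl_2-yl_1$ gives $q_1(x^2+y^2)=\epsilon(x_1^2+y_1^2)$, and after setting $t=1$ the equation becomes a single quadric $w^2+\epsilon x_1^2+\epsilon y_1^2+q_2(v)=0$ in the four affine variables $(w,x_1,y_1,v)$. This quadric is irreducible with a smooth $R$-point (inherited from~$X$), hence rational. That is the whole content of the sign-definite case, and it is exactly where the smooth $R$-point hypothesis is consumed.
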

\begin{proof}
\begin{enumerate}[label=\textrm{(\alph*)}]
\item  Assume first that the sign of $q_1(v)$ does not change.  Then one can write $q_1(v)=\epsilon(l_1(v)^2+l_2(v)^2)$ for some $\epsilon=\pm 1$ and some polynomials $l_1,l_2\in R[v]$ of degree $\leq 1$.  Dehomogenizing with respect to $t$, we obtain that $X$ is birational to a quadric (in the variables $w, x_1, y_1, v$) 
$$w^2+\epsilon x_1^2+\epsilon y_1^2+q_2(v)=0$$ where $x_1=xl_1(v)+yl_2(v)$ and $y_1=xl_2(v)-yl_1(v)$. It follows that $X$ is  rational, as is any irreducible quadric with a smooth rational point.

\item  Assume now that the sign of $q_1(v)$ changes.  Let $q_1(v,s)$ and $l_3(v,s)$ be the homogenizations of $q_1(v)$ and $l_3(v)$. Then the quadratic form $q_1(v,s)$ has two distinct roots in~$\P^1(R)$.  Consequently, after an appropriate change of projective coordinates $[v:s]\mapsto [u:r]$, we may assume that $q_1(u,r)=ur$.  After maybe exchanging $u$ and $r$, we may assume that $l_3(u,r)$ is either zero or depends nontrivially on $u$.
Dehomogenizing with respect to $r$ and $t$ shows that $X$ is birational to 
\begin{equation}\label{aeq}w^2+ux^2+uy^2+(au-l_4(u)^2)=0,
\end{equation}
where $l_4(u)\in R[u]$ is either $0$ or has degree exactly $1$.

If $l_4(u)=0$, expressing $u$ as a rational function of $w,x,y$ shows that $X$ is rational. We may therefore assume that $l_4(u)$ has degree $1$.  In this case, one can write $au-l_4(u)^2=-e^2(u-c)^2+f$ for some $e,c,f\in R$.
Setting $u_1=u-c$, we get that $X$ is birational to the affine variety
\begin{equation*}
w^2+(u_1+c)x^2+(u_1+c)y^2-e^2u_1^2+f=0.
\end{equation*}
We rewrite the last equation as
$$w^2-(eu_1-\frac{x^2}{2e}-\frac{y^2}{2e})^2+h(x,y)=0$$ 
and we make the change of variables $$w_2= w-(eu_1-\frac{x^2}{2e}-\frac{y^2}{2e}), u_2=w+(eu_1-\frac{x^2}{2e}-\frac{y^2}{2e}),$$ so that we obtain that $X$ is birational to
$$w_2u_2+h(x,y)=0$$ which is rational. \qedhere
\end{enumerate}
\end{proof}

\begin{rem}
Note that if $q_2$ is not of the form $aq_1-l_3^2$, the conclusion of Lemma~\ref{elem2} may fail. In particular, a smooth and projective model of the quadric surface bundle defined by the equation
$$w^2-ux^2-uy^2+u^2+1=0$$
 (an analogue of equation (\ref{aeq}) but with different signs)
 has nontrivial Brauer group \cite[Proposition 13.1]{CTP}.
\end{rem}

\section{Preliminaries for the Sarkisov program over nonclosed fields}
\label{sec3}

In this section, we explain how to apply over nonclosed fields some standard tools of birational geometry that will be useful to us in Section \ref{sec4}. 

Let $k$ be a field of characteristic $0$. Let $\ok$ be an algebraic closure of $k$ and let $\Gamma_k:=\Gal(\ok/k)$ be the absolute Galois group of $k$.

\subsection{Running the MMP over nonclosed fields}
\label{parMMP}

We first explain how to run the MMP over possibly nonclosed fields.  We mostly follow \cite{KollarMori}.  We also refer to \cite{PG2} for a detailed discussion in the closely related equivariant setting.

Let $f:X\to Y$ be a projective morphism of varieties over $k$. We refer to \cite[Examples 2.16 and 2.18]{KollarMori} 
for the definitions of the cones of relative $1$\nobreakdash-cycles $\NE(X/Y)\subset\overline{\NE}(X/Y)\subset\NN_1(X/Y)$.  As the $\Gamma_k$\nobreakdash-orbit of a curve in~$X_{\ok}$ is a curve defined over $k$, we see that this sequence of inclusions can be identified with $\NE(X_{\ok}/Y_{\ok})^{\Gamma_k}\subset\overline{\NE}(X_{\ok}/Y_{\ok})^{\Gamma_k}\subset\NN_1(X_{\ok}/Y_{\ok})^{\Gamma_k}$ (note that we do not require curves to be geometrically irreducible).
 
Assume moreover that $(X,\Delta)$ is a log canonical $\Q$-factorial pair over $k$ (see \cite[\S 2.3]{KollarMori} for the definitions of the various classes of singularities that we use). We refer to \cite[\S 3.7]{KollarMori} for what it means to run the MMP for~$(X,\Delta)$ over $Y$. This procedure is still conjectural in general, but it is unconditional in dimension~$3$ as we explain below. It preserves the classes of singularities that we consider (see \cite[Propositions~3.36 and~3.37 and Corollaries 3.42, 3.43 and 3.44]{KollarMori}). It requires four steps: proving a cone theorem,  showing the existence of contractions associated with extremal rays,  showing that the flips of small extremal contractions exist, and showing that no infinite sequences of flips exist. 

Over a nonclosed field $k$, the first step is the following version of the cone theorem.

\begin{thm}
\label{coneth}
There exist countably many rational curves $(C_i)_{i\in I}$ in $X_{\ok}$ whose $\Gamma_k$\nobreakdash-orbits generate $(K_X+\Delta)$\nobreakdash-neg\-a\-tive extremal rays $R_i\subset \overline{\NE}(X/Y)$ and such that
$$\overline{\NE}(X/Y)=\overline{\NE}(X/Y)_{K_X+\Delta\geq 0}+\sum_{i\in I} R_i.$$
In addition, if $H$ is ample on $X$ and $\epsilon>0$, there is a finite subset $J\subset I$ such that
$$\overline{\NE}(X/Y)=\overline{\NE}(X/Y)_{K_X+\Delta+\epsilon H\geq 0}+\sum_{i\in J} R_i.$$
\end{thm}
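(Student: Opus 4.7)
The plan is to deduce the statement by Galois descent from the usual cone theorem applied to $(X_{\ok},\Delta_{\ok})\to Y_{\ok}$. This produces countably many rational curves $(\bar{C}_j)_{j\in J}$ in $X_{\ok}$ generating $(K_X+\Delta)$-negative extremal rays $\bar{R}_j$ of $\overline{\NE}(X_{\ok}/Y_{\ok})$ with
$$\overline{\NE}(X_{\ok}/Y_{\ok}) = \overline{\NE}(X_{\ok}/Y_{\ok})_{K_X+\Delta\geq 0} + \sum_{j\in J}\bar{R}_j,$$
and a parallel finite-sum refinement after adding $\epsilon H$. Since $K_X+\Delta$ and $H$ are defined over $k$, both the non-negative half-cone and the numerically characterized set of rays $\{\bar{R}_j\}$ are stable under $\Gamma_k$. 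The action of $\Gamma_k$ on the finitely generated abelian group $N_1(X_{\ok}/Y_{\ok})_{\Z}$ is continuous and thus factors through a finite quotient $G$, so every $G$-orbit $O\subset J$ is finite.

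Next, I would show that Galois orbits of extremal rays over $\ok$ produce extremal rays over $k$. For each orbit $O$, the face $F_O:=\sum_{j\in O}\bar{R}_j\subset\overline{\NE}(X_{\ok}/Y_{\ok})$ is $G$-stable, and an averaging computation exploiting transitivity of $G$ on $O$ shows that $F_O^{\Gamma_k}$ is the single ray $R_O:=\R_{\geq 0}\cdot\sum_{j\in O}[\bar{C}_j]$; this sum is nonzero because $(K_X+\Delta).\sum_{j\in O}[\bar{C}_j]<0$. As the Galois-invariant part of a face of $\overline{\NE}(X_{\ok}/Y_{\ok})$, $R_O$ is a face of $\overline{\NE}(X/Y)=\overline{\NE}(X_{\ok}/Y_{\ok})^{\Gamma_k}$, hence an extremal ray, and it is $(K_X+\Delta)$-negative and generated by the $\Gamma_k$-orbit of any $\bar{C}_j$ with $j\in O$. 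I would take the family $(C_i)_{i\in I}$ of the theorem to be the collection of all $\bar{C}_j$, indexed so that $R_i=R_O$ whenever $C_i$ lies in the orbit $O$.

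To prove the decomposition for $\overline{\NE}(X/Y)$, I would take $c\in\overline{\NE}(X/Y)$, use the cone theorem over $\ok$ to write $c=c_0+\sum_O\phi_O$ with $c_0$ in the non-negative cone and $\phi_O\in F_O$ (grouping extremal contributions by $G$-orbit), and then symmetrize:
$$c=\frac{1}{|G|}\sum_{g\in G}g\cdot c=\frac{1}{|G|}\sum_{g\in G}gc_0+\sum_O\frac{1}{|G|}\sum_{g\in G}g\phi_O.$$
The first summand lies in $\overline{\NE}(X/Y)_{K_X+\Delta\geq 0}$ and the $O$-summand in $F_O^{\Gamma_k}=R_O$, both by convexity and Galois-stability. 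The refinement with $\epsilon H$ follows by running the same argument for $K_X+\Delta+\epsilon H$, using finiteness of the $(K_X+\Delta+\epsilon H)$-negative extremal rays over $\ok$ and of their $G$-orbits. The main obstacle I expect is the extremality step in the middle paragraph, namely showing that $F_O^{\Gamma_k}$ is exactly one-dimensional so that $\Gamma_k$-orbits of extremal rays descend to extremal rays and not merely to higher-dimensional faces of $\overline{\NE}(X/Y)$; once this small lemma is in hand, the rest is formal averaging over the finite group $G$.
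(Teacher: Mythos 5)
Your overall strategy (pull back the cone theorem from $\ok$, use that $\Gamma_k$ acts through a finite quotient $G$, and average over $G$) is indeed the intended route, and your decomposition argument in the last paragraph is fine. However, the extremality step is genuinely wrong, not merely a gap to be filled, and you have misdiagnosed what the obstacle is.

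You assert that $F_O:=\sum_{j\in O}\bar{R}_j$ is a face of $\overline{\NE}(X_{\ok}/Y_{\ok})$ and then conclude that $R_O=F_O^{\Gamma_k}$, being the $G$-fixed part of a face, is a face (hence an extremal ray) of $\overline{\NE}(X/Y)=\overline{\NE}(X_{\ok}/Y_{\ok})^{\Gamma_k}$. But the cone generated by a $G$-orbit of extremal rays is typically \emph{not} a face of the ambient cone; it can pass through the interior. For a toy model, take the cone over a square with vertices $(\pm1,0,1),(0,\pm1,1)$ in $\R^3$ and the involution $(x,y,z)\mapsto(-x,y,z)$. The orbit $O=\{(1,0,1),(-1,0,1)\}$ has $F_O$ equal to the $2$-plane cone spanned by these two rays, which is not a face, and $R_O=\R_{\geq0}(0,0,1)$ lies in the \emph{interior} of the fixed cone spanned by $(0,1,1)$ and $(0,-1,1)$; it is $1$-dimensional, as your averaging correctly shows, but it is not extremal. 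So the danger is not that $F_O^{\Gamma_k}$ might be higher-dimensional (it is always a ray, precisely because the averaging projection sends every $\bar{R}_j$, $j\in O$, to the same ray $R_O$) but that this ray need not be extremal. The paper itself flags exactly this after the theorem: ``not all rational curves $C\subset X_{\ok}$ generating a $(K_X+\Delta)$-negative extremal ray of $\overline{\NE}(X_{\ok}/Y_{\ok})$ have the property that their $\Gamma_k$-orbits generate an extremal ray of $\overline{\NE}(X/Y)$.'' Your proposal concludes that every orbit does, so it proves a false statement.

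The repair is to select a subcollection of orbits. Your averaging shows that
$$\overline{\NE}(X/Y)=\overline{\NE}(X/Y)_{K_X+\Delta\geq 0}+\sum_{O}R_O,$$
but without asserting extremality of the $R_O$. Then one argues that every $(K_X+\Delta)$-negative extremal ray $R$ of $\overline{\NE}(X/Y)$ must equal some $R_O$: decompose a nonzero $r\in R$ as $c_0+\sum_O a_Or_O$; extremality of $R$ forces $c_0=0$ (it cannot be $(K_X+\Delta)$-negative) and then $R=R_{O}$ for any $O$ with $a_O>0$. Taking $I$ to index only the orbits for which $R_O$ is extremal, and checking that discarding the remaining (redundant) $R_O$ still yields the displayed decomposition and its $\epsilon H$-refinement, gives the theorem. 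This is the content of the argument in \cite[Theorem 26]{Kollarreal} that the paper refers to; what you wrote omits it and replaces it with an incorrect claim.
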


Over $\ok$, we refer to \cite[Theorem 3.25 (1) and Theorem 3.35]{KollarMori} for the cone theorem in the dlt case and to \cite[Theorem 0.2]{Ambro} or \cite[Theorem 1.1]{Fujino} in the log canonical case. Theorem \ref{coneth} can be deduced formally from the statement over $\ok$ (the argument given in a restricted setting in \cite[Theorem 26]{Kollarreal} works in general). See also \cite[Theorem 3.3.1]{PG2} in the equivariant setting.  We insist that not all rational curves $C\subset X_{\ok}$ generating a $(K_X+\Delta)$\nobreakdash-negative extremal ray of $\overline{\NE}(X_{\ok}/Y_{\ok})$ have the property that their $\Gamma_k$-orbits generate an extremal ray of~$\overline{\NE}(X/Y)$ (see Example \ref{exray}).

\begin{ex}
\label{exray}
Over $k=\R$, take $Y=\Spec(\R)$ and consider the smooth cubic surface $X:=\{X_0(X_1^2+X_2^2+X_3^2)=X_1X_2X_3\}\subset\P^3_{\R}$. The five lines $\ell:=\{X_0=X_1=0\}$, $\ell':=\{X_0=X_2=0\}$, $\ell'':=\{X_0=X_3=0\}$, $\ell^+:=\{X_1=X_2+iX_3=0\}$ and $\ell^-:=\{X_1=X_2-iX_3=0\}$ of $\P^3_{\C}$ are included in $X_{\C}$. As $\ell^+$ is a $(-1)$-curve of~$X_{\C}$, it generates a $K_{X}$-negative extremal ray of $\overline{\NE}(X_{\C}/Y_{\C})$. Its $\Gamma_{\R}$-orbit~$\ell^++\ell^-$ is however not extremal in $\overline{\NE}(X/Y)$, as $\cO_X(\ell^++\ell^-)=\cO_X(1)(-\ell)=\cO_X(\ell'+\ell'')$.
\end{ex}

The second step is the following contraction theorem (applied to extremal rays, that is to one-dimensional extremal faces of $\overline{\NE}(X/Y)$).

\begin{thm}
\label{contractionth}
Let $F$ be a $(K_X+\Delta)$\nobreakdash-neg\-a\-tive extremal face of $\overline{\NE}(X/Y)$. There is a morphism $c_F:X\to X'$ of projective varieties over $Y$ with $\cO_{X'}\isoto (c_F)_*\cO_X$ such that a curve $C\subset X$ contracted in $Y$ is contracted in $X'$ if and only if $[C]\in F$. 
\end{thm}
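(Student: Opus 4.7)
The strategy is to deduce Theorem \ref{contractionth} from its known analogue over $\ok$ via Galois descent, in the same spirit in which Theorem \ref{coneth} is obtained from its counterpart over $\ok$.

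First, I would construct a $\Gamma_k$-stable $(K_{X_{\ok}}+\Delta_{\ok})$-negative extremal face $\bar F$ of $\overline{\NE}(X_{\ok}/Y_{\ok})$ whose intersection with $\overline{\NE}(X/Y)$ recovers $F$. Applying the second part of Theorem~\ref{coneth} with an ample $H$ and a sufficiently small $\epsilon>0$, one may write $F=R_{i_1}^{\Gamma_k}+\cdots+R_{i_r}^{\Gamma_k}$, where each summand is the ray traced out in $\overline{\NE}(X/Y)$ by the $\Gamma_k$-orbit of an extremal ray $R_{i_j}$ of $\overline{\NE}(X_{\ok}/Y_{\ok})$. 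I take $\bar F$ to be the closed convex cone generated by all rays in these $\Gamma_k$-orbits. $\Gamma_k$-stability is then immediate. To check $(K_{X_{\ok}}+\Delta_{\ok})$-negativity, one averages: for nonzero $c\in\bar F$, the class $\tilde c:=\frac{1}{|G|}\sum_{\sigma\in G}\sigma(c)$ (with $G$ the finite quotient of $\Gamma_k$ acting on $\NN_1(X_{\ok}/Y_{\ok})_{\R}$) lies in $F\setminus\{0\}$, and $G$-invariance of $K_{X_{\ok}}+\Delta_{\ok}$ yields $(K_{X_{\ok}}+\Delta_{\ok})\cdot c=(K_X+\Delta)\cdot\tilde c<0$.

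Second, I would apply the contraction theorem over $\ok$ (a standard ingredient of the MMP in the log canonical case; see the references used in \S\ref{parMMP}) to the face $\bar F$, obtaining a morphism $\bar c\colon X_{\ok}\to X'_{\ok}$ of projective varieties over $Y_{\ok}$ with $\cO_{X'_{\ok}}\isoto\bar c_*\cO_{X_{\ok}}$ that contracts exactly those curves whose class lies in $\bar F$. Third, I would descend $\bar c$ to $k$ using the uniqueness of the contraction. For each $\sigma\in\Gamma_k$, the conjugate morphism $\sigma^*\bar c\colon X_{\ok}\to\sigma^*X'_{\ok}$ satisfies the same defining properties as $\bar c$ because $\bar F$ is $\Gamma_k$-stable; by the rigidity lemma, there is a canonical isomorphism $\phi_\sigma\colon X'_{\ok}\isoto\sigma^*X'_{\ok}$ with $\sigma^*\bar c=\phi_\sigma\circ\bar c$. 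The family $\{\phi_\sigma\}$ is a Galois descent datum, effective because $X'_{\ok}$ carries a $\Gamma_k$-equivariant $\bar c$-ample line bundle coming from a $\Q$-Cartier divisor supporting $\bar F$. Thus $X'_{\ok}$ and $\bar c$ descend to $X'/k$ and $c_F\colon X\to X'$ with the asserted properties.

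The main obstacle is the extremality (face property) of $\bar F$ in Step~1: averaging readily produces a $\Gamma_k$-stable negative subcone, but showing that it is a face of $\overline{\NE}(X_{\ok}/Y_{\ok})$ requires a separating supporting hyperplane. I would construct one by combining the polyhedral structure of the $(K_{X_{\ok}}+\Delta_{\ok}+\epsilon H)$-negative region provided by Theorem~\ref{coneth} with a $\Gamma_k$-equivariant extension of a supporting functional of $F$ on $\NN_1(X/Y)_{\R}$ to $\NN_1(X_{\ok}/Y_{\ok})_{\R}$, exploiting the $G$-equivariant decomposition of the latter into its invariant part $\NN_1(X/Y)_{\R}$ and a complement.
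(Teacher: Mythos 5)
Your overall strategy---take a suitable $\Gamma_k$-stable extremal face of $\overline{\NE}(X_{\ok}/Y_{\ok})$, contract it over $\ok$, descend by Galois equivariance---is exactly the paper's strategy. The descent part (Steps 2--3) is sound and matches what the paper asserts more tersely, namely that the contraction of the chosen face is $\Gamma_k$-equivariant and hence descends. The point of divergence is the choice of the face over $\ok$, and that is where your proposal has a genuine gap.

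You set $\bar F$ equal to the convex cone generated by the $\Gamma_k$-orbits of the extremal rays $R_{i_j}$ over $\ok$ whose averages generate $F$, and you correctly identify proving that $\bar F$ is a face as the crux of the matter. But $\bar F$ need \emph{not} be a face, and your proposed supporting hyperplane does not rescue this. Concretely: a supporting functional $L$ of $F$, regarded via the decomposition $\NN_1(X_{\ok}/Y_{\ok})_{\R}=\NN_1(X/Y)_{\R}\oplus(\text{complement})$ as a $G$-invariant functional on $\NN_1(X_{\ok}/Y_{\ok})_{\R}$, is indeed nonnegative on $\overline{\NE}(X_{\ok}/Y_{\ok})$ (for $c$ in the cone, $L(c)=L(c_{\mathrm{inv}})\ge 0$ since $c_{\mathrm{inv}}\in\overline{\NE}(X/Y)$). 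The face it cuts out is $\wF':=\{c\in\overline{\NE}(X_{\ok}/Y_{\ok}):c_{\mathrm{inv}}\in F\}$, which contains $\bar F$ but may be strictly larger. A model counterexample already occurs when $F=\overline{\NE}(X/Y)$ is the whole cone (the improper face, e.g.\ for a Mori fiber space $\pi$): if the extremal ray of $\overline{\NE}(X/Y)$ arises as the average of an antipodal pair of extremal rays of a non-simplicial $\overline{\NE}(X_{\ok}/Y_{\ok})$ (think of the diagonal of a cone over a square), the cone $\bar F$ they generate is a two-dimensional subcone cutting through the interior and is not a face, even though $\wF'$ (the whole cone) certainly is. So as written, Step~1 cannot be completed.

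The paper sidesteps this entirely: it takes $\wF$ to be the \emph{smallest face of $\overline{\NE}(X_{\ok}/Y_{\ok})$ containing $F$}. This is a face by definition, is $\Gamma_k$-stable because $F$ is, and its $(K_{X_{\ok}}+\Delta_{\ok})$-negativity follows from the local polyhedrality given by the cone theorem. The crucial check, which the paper leaves implicit, is that $\wF\cap\overline{\NE}(X/Y)=F$, so that the descended contraction kills exactly the curves with class in $F$; this follows from the general fact that for a face $G$ of a strongly convex cone $K$ one has $K\cap\operatorname{span}(G)=G$. Your proposal can be repaired by replacing $\bar F$ with $\wF'$ (the face cut out by your extended functional) or with $\wF$: both are genuine $\Gamma_k$-stable negative extremal faces with $\wF'\cap\overline{\NE}(X/Y)=\wF\cap\overline{\NE}(X/Y)=F$, and the rest of your argument then goes through. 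But the claim that $\bar F$ itself is a face must be abandoned.
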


Again, the statement appears in a restricted setting in \cite[Theorem 27]{Kollarreal}, and in the equivariant setting in \cite[Theorem 3.3.1]{PG2}. To prove it, let $\wF$ be the smallest face of $\overline{\NE}(X_{\ok}/Y_{\ok})$ containing $F$ and note that the contraction of~$\wF$ constructed in \cite[Theorem 3.25 (3), Theorem 3.35]{KollarMori} in the dlt case (and in \cite[Theorem 0.2]{Ambro} or \cite[Theorem 1.1]{Fujino} in the log canonical case) is $\Gamma_k$\nobreakdash-equivariant and hence descends over $k$ to the required morphism $c_F:X\to X'$.

The existence of flips of small contractions follows from the finite generation of appropriate canonical algebras (see \cite[Corollary 6.4 (2)]{KollarMori}),  which holds as a consequence of \cite[Corollary 1.2]{Birkar} or \cite[Corollary 1.8]{HaconXu}.  As for the termination of flips, the full statement is only known when $X$ has dimension $3$. We refer to \cite[Theorem 4.15]{KollarMatsuki} in the canonical case (the only case that we will need); in the log canonical case, one would have to check that \cite[Proof of 5.1.3]{Shokurov3} works over $k$. See also \cite[Theorem 3.4.3, Theorem 4.2.2]{PG2} in the equivariant setting.

There are two reasons why running the MMP over $k$ may be different from running the MMP over $\ok$. The first is that the extremal contractions that appear have relative Picard rank $1$ over $k$, but may have higher relative Picard rank over~$\ok$ (and hence may not be contractions of extremal rays over $\ok$). The second is that all the steps of the MMP over $k$ preserve the condition of being $\Q$-factorial,  but not the condition of being $\Q$\nobreakdash-factorial over $\ok$ in general.  As a consequence, the outcome of the MMP over $k$ may not be  $\Q$\nobreakdash-factorial over $\ok$, in which case it is necessarily distinct from the outcome of the MMP over $\ok$ (see Example \ref{exQfact}).

\begin{ex}
\label{exQfact}
Consider the singular quadric $Y:=\{X_0^2=X_1^2+X_2^2+X_3^2\}\subset\P^4_{\R}$. Let $f:X\to Y$ be the blow-up of the singular point $[0:0:0:0:1]$ of $Y$, so~$X$ is smooth. The exceptional locus $E$ of $f$ is a smooth projective quadric surface with~$E(\R)\simeq\bS^2$. The vector space $\NN_1(X_{\C}/Y_{\C})$ is of dimension $2$, generated by the two rulings of $E_{\C}$. Since these two rulings are exchanged by the action of $\Gamma_{\R}\simeq\Z/2$, the vector space $\NN_1(X_{\C}/Y_{\C})^{\Gamma_{\R}}$ is of dimension $1$. It follows that $f:X\to Y$ is the contraction of an extremal ray, and hence $Y$ is the only possible outcome of running the MMP for $X$ over $Y$ (in particular, the variety $Y$ is $\Q$-factorial).
However, the variety $Y_{\C}$ is not $\Q$-factorial (as the Weil divisor $\{X_0-X_1=X_2-iX_3=0\}$ in~$Y_{\C}$ is not $\Q$-Cartier) and hence is not a possible outcome of running the MMP for~$X_{\C}$ over~$Y_{\C}$. The only two possible outcomes are the two small resolutions of~$Y_{\C}$ obtained from $X_{\C}$ by contracting one of the two rulings of $E_{\C}$.
\end{ex}

\subsection{Mori fiber spaces}
\label{parMFS}

The basic objects of study of the Sarkisov program are Mori fiber spaces and birational maps between them.

\begin{Def}
\label{defMFS}
A morphism $\pi:X\to S$ with geometrically connected fibers between connected normal projective varieties over $k$ is a \textit{Mori fiber space} if
\begin{enumerate}[label=(\roman*)]
\item $X$ has $\Q$-factorial terminal singularities;
\item the fibers of $\pi$ have positive dimension, $\rho(X/S)=1$ and $-K_X$ is $\pi$-ample.
\end{enumerate}
\end{Def}

Some references allow broader classes of singularities on Mori fiber spaces; here we insist that $X$ is terminal.
Let $\phi:X\dashrightarrow X'$ be a birational map between two Mori fiber spaces $\pi:X\to S$  and $\pi':X'\to S'$. The map $\phi$ is 
\textit{square} if it induces 
an isomorphism between the generic fibers of $\pi$ and $\pi'$.  If in addition the map $\phi$ is a (regular) isomorphism, we say that~$\phi$ is an {\it isomorphism} of Mori fiber spaces.

\begin{Def}
\label{defbirrig}
A Mori fiber space $\pi:X\to S$ is \textit{birationally superrigid} if for any Mori fiber space $\pi:X'\to S'$, any birational map $\phi:X\dashrightarrow X'$ is square.
\end{Def}

The Noether--Fano inequalities provide a criterion for $\phi$ to be an isomorphism of Mori fiber spaces. Choose a sufficiently ample divisor $A'$ on $S'$ and a sufficiently large and divisible integer $\mu'$, so that the linear system $\cH':=|-\mu'K_{X'}+(\pi')^{*}A'|$ on $X'$ is very ample. Let $\cH:=(\phi^{-1})_*\cH'$ be the strict transform of $\cH'$ on $X$. Since~$\rho(X/S)=1$ and $-K_X$ is $\pi$-ample, one can find a divisor class $A$ on $S$ and a positive rational number $\mu$ such that $\cH$ is numerically equivalent to $-\mu K_X+\pi^*A$.

\begin{thm}[Noether--Fano inequalities]
\label{NF}
(i) With the above notation, $\mu\geq \mu'$.

(ii) If $(X, \frac{1}{\mu} \mathcal H)$ has canonical singularities and $K_X+\frac{1}{\mu}\mathcal H$ is nef, then $\phi$ induces an isomorphism of Mori fiber spaces. In particular, $\mu=\mu'$.
\end{thm}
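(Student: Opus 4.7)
The plan is to reduce both parts to a single discrepancy computation on a common resolution of $\phi$, combined with applications of the Negativity Lemma. All the ingredients---Hironaka's resolution, the Negativity Lemma, and the theory of log discrepancies for terminal pairs---are characteristic-zero results that pass from $\ok$ to $k$ without modification, so beyond the Galois-equivariance issues already addressed in \S\ref{parMMP}, this amounts to running the classical argument of Corti over $k$.

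I would begin by constructing a smooth projective variety $W$ over $k$, together with birational morphisms $p\colon W\to X$ and $q\colon W\to X'$ resolving $\phi$ and simultaneously log-resolving the pairs $(X,\cH)$ and $(X',\cH')$. Fix a general $H'\in\cH'$, set $H:=(\phi^{-1})_*H'\in\cH$, and let $\tilde H\subset W$ denote the common strict transform. Writing
\[
K_W = p^*K_X + \sum_i a_i E_i = q^*K_{X'} + \sum_i a'_i E_i,\quad p^*H = \tilde H + \sum_i m_i E_i,\quad q^*H' = \tilde H + \sum_i m'_i E_i,
\]
with $a_i,a'_i,m_i,m'_i\geq 0$ by terminality and effectivity, subtraction yields the fundamental identity
\[
p^*\bigl(K_X+\tfrac{1}{\mu}H\bigr) - q^*\bigl(K_{X'}+\tfrac{1}{\mu'}H'\bigr) = \bigl(\tfrac{1}{\mu}-\tfrac{1}{\mu'}\bigr)\tilde H + \sum_i \delta_i E_i,
\]
where $\delta_i:=(\tfrac{m_i}{\mu}-a_i)-(\tfrac{m'_i}{\mu'}-a'_i)$.

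For (i), I would argue by contradiction, assuming $\mu<\mu'$. Pick a general curve $C\subset X$ in a fiber of $\pi$ (working over $\ok$ and descending by $\Gamma_k$-orbits if necessary), and let $\tilde C\subset W$ be its strict transform; for $C$ general enough, $\tilde C$ avoids all the exceptional divisors $E_i$, since their images in $X$ form a subset of codimension $\geq 2$. Since $K_X+\tfrac{1}{\mu}H\equiv\tfrac{1}{\mu}\pi^*A$ and $C$ is contracted by $\pi$, one has $\tilde C\cdot p^*(K_X+\tfrac{1}{\mu}H)=0$; since $K_{X'}+\tfrac{1}{\mu'}H'\equiv\tfrac{1}{\mu'}(\pi')^*A'$ is nef on $X'$, its $q$-pullback meets $\tilde C$ non-negatively. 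Intersecting the displayed identity with $\tilde C$ therefore yields a non-positive quantity on the left. The right-hand side, however, equals $(\tfrac{1}{\mu}-\tfrac{1}{\mu'})\,C\cdot H$, which is strictly positive: the relation $H\equiv -\mu K_X+\pi^*A$ and the $\pi$-ampleness of $-K_X$ give $C\cdot H=-\mu\,C\cdot K_X>0$. This contradiction yields $\mu\geq\mu'$.

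For (ii), the canonicity of $(X,\tfrac{1}{\mu}\cH)$ translates into $a_i-\tfrac{m_i}{\mu}\geq 0$ for every $p$-exceptional divisor, while the nefness of $K_X+\tfrac{1}{\mu}\cH$ says that $p^*(K_X+\tfrac{1}{\mu}H)$ is nef on $W$. Together with (i), the canonicity of $(X',\tfrac{1}{\mu'}\cH')$ that follows from the ``sufficiently large and divisible $\mu'$'' assumption, and the Negativity Lemma applied to $q$, these conditions force $\mu=\mu'$ and every $\delta_i=0$, so that $p$ and $q$ are crepant for the corresponding log pairs and $\phi$ is an isomorphism in codimension one. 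The main obstacle, and the genuinely delicate step, is to upgrade this into the statement that $\phi$ is a \emph{square} isomorphism of Mori fiber spaces. For this I would exploit the $\Q$-factoriality of $X$ and $X'$ and the condition $\rho(X/S)=\rho(X'/S')=1$: the numerical equivalence $K_X+\tfrac{1}{\mu}\cH\equiv\tfrac{1}{\mu}\pi^*A$ allows $A$ to be matched with a divisor on $S'$, and a Rigidity Lemma argument on $W$ then shows that the rational map $S\dashrightarrow S'$ induced by $\phi$ is a regular isomorphism and that $\phi$ itself is regular, with the $\pi$-ampleness of $-K_X$ ruling out any contraction of a fiber of $\pi$.
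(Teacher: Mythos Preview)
Your approach is exactly the paper's: the paper gives no argument of its own and simply asserts that Corti's proof in \cite[Theorem 4.2]{Corti1} carries over verbatim to nonclosed $k$, which is precisely the classical resolution-plus-Negativity-Lemma computation you have sketched.

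One small slip in your argument for (i): the $E_i$ in your fundamental identity must range over all divisors that are exceptional for $p$ \emph{or} for $q$, since you use the same index set in both expressions for $K_W$. But those $E_i$ that are $q$-exceptional and not $p$-exceptional map onto divisors in $X$, so a general curve $C$ in a fiber of $\pi$ will meet them and your claim that $\tilde C$ avoids all the $E_i$ fails. The fix is immediate: for such $E_i$ one has $a_i=m_i=0$, while the canonicity of $(X',\tfrac{1}{\mu'}\cH')$ (which you only invoke in (ii), but which already follows from the very ampleness of $\cH'$) gives $a'_i-\tfrac{m'_i}{\mu'}\geq 0$, hence $\delta_i\geq 0$; since also $E_i\cdot\tilde C\geq 0$, these terms only help your inequality. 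With this adjustment your outline of both parts is correct and coincides with Corti's argument.
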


The proof of Theorem \ref{NF} given in \cite[Theorem 4.2]{Corti1} when $k$ is algebraically closed works verbatim over possibly nonclosed fields.

\subsection{Sarkisov links}

Sarkisov links $\phi:X\dashrightarrow X'$ are a kind of elementary birational maps between two Mori fiber spaces $\pi:X\to S$ and $\pi':X'\to S'$. 
There are four types of Sarkisov links that are part of a commutative diagram of the form
\begin{equation}
\label{links}
{
\def\arraystretch{1.8}
\begin{array}{cc}
\begin{tikzcd}[column sep=0.8cm,row sep=0.2cm]
Z\ar[dd,"q",swap]  \ar[rr,dotted, "\chi"] && X' \ar[dd,"\pi'"] \\ \\
X\ar[uurr,"\phi",dashed,swap] \ar[dr,"\pi",swap] && S' \ar[dl, "p"] \\
& T=S &
\end{tikzcd}
&
\begin{tikzcd}[column sep=0.4cm,row sep=0.2cm]
Z\ar[dd,"q",swap]  \ar[rr,dotted, "\chi"] && Z' \ar[dd,"q'"] \\ \\
X \ar[rr,"\phi",dashed,swap] \ar[dr,"\pi",swap] && X' \ar[dl,"\pi'"] \\
& T=S=S' &
\end{tikzcd}
\\
\text{Type}~$I$ & \text{Type}~$II$ 
\\
\begin{tikzcd}[column sep=0.8cm,row sep=0.2cm]
X\ar[ddrr,"\phi",dashed,swap] \ar[dd,"\pi",swap]  \ar[rr,dotted, "\chi"] && Z\ar[dd,"q"] \\ \\
S \ar[dr, "p", swap] && X' \ar[dl,"\pi'"] \\
& T=S' &
\end{tikzcd}
&
\begin{tikzcd}[column sep=1.2cm,row sep=0.2cm]
X \ar[rr,"\phi",dotted,swap, "\chi"'] \ar[dd,"\pi",swap]  && X' \ar[dd,"\pi'"] \\ \\
S \ar[dr, "p", swap] && S' \ar[dl, "p'"] \\
& T &
\end{tikzcd}
\\
\text{Type}~$III$ & \text{Type}~$IV$, 
\end{array}
}
\end{equation}
where the morphisms $q$ and $q'$ are divisorial extremal contractions, the morphisms~$p$ and $p'$ are extremal contractions, and the rational maps $\chi$ are a nontrivial composition of log-flips over the base $T$ of the link; note that in dimension $\leq 3$ the contractions $p$ and $p'$ in the link of type IV must be of fiber type. We refer to \cite[Definition 3.8, Remark 3.10]{BLZ}
and \cite[Section 4]{Prokhorov} for more details.

We insist that Mori fiber spaces and Sarkisov links over $k$ may not give rise to Mori fiber spaces and Sarkisov links over $\ok$, because the relative Picard rank one conditions might not be preserved by extension of scalars.

\subsection{Conic bundles}

We will focus on Mori fiber spaces of relative dimension $1$.

\begin{Def}
\label{defQconic}
A $\Q$\textit{-conic bundle} $\pi:X\to S$ is a Mori fiber space whose generic fiber is a conic.
\end{Def}
 
Let $\pi:X\to S$ be a morphism of connected normal projective varieties over~$k$ whose generic fiber is a conic (such as a $\Q$-conic bundle).  The \textit{degeneration di\-vi\-sor}~$\Delta\subset S$ of $\pi$ is the divisorial part of the locus over which $\pi$ is not smooth. 

We introduce the following particular class of $\Q$-conic bundle.

\begin{Def}
\label{defstandard}
A \textit{standard conic bundle} is a flat morphism $\pi:X\to S$ between smooth projective connected varieties over $k$, whose generic fiber is a conic, with strict normal crossings (snc) degeneration divisor $\Delta\subset S$, and such that $\rho(X/S)=1$. 
\end{Def}

Unlike other references, we choose to require that $\Delta$ has simple normal crossing singularities.
The next theorem provides standard models for arbitrary conic bundles.  Over~$\ok$, it is due to Sarkisov \cite[Theorem 1.13]{Sarkisovconic}, 
and over $k$ to Avilov \cite[Theorem~1]{Avilov} in dimension $3$ and to Oesinghaus \cite[Theorem 4]{Oesinghaus} in general. 

\begin{thm}
\label{thstandard}
Let $\pi:X\to S$ be a morphism of connected normal projective varieties over $k$ whose generic fiber is a conic.  Then there exist a standard conic bundle $\pi^{\bullet}:X^\bullet\to S^{\bullet}$,  a birational map $\xi:X^\bullet\dashrightarrow X$ and a birational morphism $\zeta:S^\bullet\to S$  such that 
$\zeta\circ \pi^\bullet=\pi\circ \xi$.
\end{thm}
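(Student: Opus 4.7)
My plan is to upgrade $\pi$ to a standard conic bundle in three stages — resolution of singularities, flattening, and a relative MMP — all carried out Galois-equivariantly so that each step descends from $\ok$ to $k$.

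First, using Hironaka's resolution of singularities (valid in characteristic zero over arbitrary fields), I would replace $X$ and $S$ by smooth projective varieties without changing the generic fiber of $\pi$. Hironaka's flattening theorem, followed by a further resolution of $X$, then ensures that $\pi$ is flat with $X$ and $S$ smooth, so that the degeneration divisor $\Delta\subset S$ is well defined. Applying embedded resolution of singularities to the pair $(S,\Delta)$ yields a birational morphism $S'\to S$ such that $S'$ is smooth and the preimage of $\Delta$ is snc; base-changing $X$ to $S'$ and resolving the resulting singularities gives a flat conic bundle with smooth total space and snc degeneration divisor. Because all of these operations (Hironaka resolution, Hironaka flattening, embedded resolution) are canonical and their centers are intrinsically defined, they descend automatically over $k$.

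At this point one still needs to reduce the relative Picard number to $1$, which I would do by running the relative MMP for $K_X$ over $S$ using Theorems \ref{coneth} and \ref{contractionth}. Since $-K_X$ is $\pi$-ample on the smooth conic generic fiber, every $K_X$-negative extremal ray of $\overline{\NE}(X/S)$ is contained in a fiber of $\pi$. Over $\ok$ the resulting extremal contractions are classified by Sarkisov \cite[Theorem 1.13]{Sarkisovconic}; since they are characterized by local conditions on $\pi$, their loci are $\Gamma_k$-invariant and the contractions descend to $k$. When $\dim X\geq 4$ one additionally needs the existence of flips and their termination, which are provided by \cite[Corollary 1.2]{Birkar} together with the discussion in Section \ref{parMMP}. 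The process terminates with $\rho(X/S)=1$, producing the desired $\pi^\bullet:X^\bullet\to S^\bullet$.

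The main obstacle is verifying that this MMP preserves smoothness of $X$ and $S$ together with the snc property of $\Delta$: this is the technical heart of Sarkisov's theorem \cite{Sarkisovconic}, and the extensions to nonclosed $k$ carried out in \cite[Theorem 1]{Avilov} and \cite[Theorem 4]{Oesinghaus} ultimately rest on the observation that Sarkisov's étale-local analysis of the possible extremal contractions is insensitive to the base field and that all relevant centers are automatically $\Gamma_k$-invariant.
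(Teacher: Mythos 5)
The paper does not actually prove Theorem~\ref{thstandard}: it cites the result to Sarkisov over $\ok$, to Avilov in dimension~$3$ over $k$, and to Oesinghaus in general over $k$. Your proposal attempts to reconstruct the argument, but it has a genuine gap precisely at the step you flag as ``the technical heart.'' Running a relative $K_X$-MMP over $S$ only produces a Mori fiber space $X''\to S''$ with $\Q$-factorial \emph{terminal} (not smooth) total space, over some normal (not necessarily smooth) $S''$ birational to $S$, and with no control on the degeneration divisor over $S''$. To return to a situation with smooth $X$, smooth $S$, flat $\pi$, snc degeneration divisor \emph{and} $\rho(X/S)=1$ simultaneously, Sarkisov's argument is iterative: one resolves $S''$ again, base changes, re-resolves $X$, re-runs the MMP, and one must prove this loop terminates and that the terminal singularities appearing at each stage can be removed without destroying the relative Picard number. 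Deferring this step to the very references the paper already cites means the proposal does not produce an independent proof; it reproduces the citation.

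There is also a conceptual slip in how you descend to $k$. You invoke Theorems~\ref{coneth} and~\ref{contractionth} (the cone and contraction theorems stated over $k$) and then argue that the contractions ``descend'' because their loci are $\Gamma_k$-invariant, pointing to Sarkisov's \'etale-local classification of contractions over $\ok$. These are two different strategies, and they do not agree in general: as the paper emphasizes in Section~\ref{parMMP}, an extremal ray of $\overline{\NE}(X/S)$ over $k$ need not remain extremal over $\ok$, and a $\Q$-factorial model over $k$ need not be $\Q$-factorial over $\ok$, so the $k$-MMP is not simply the $\Gamma_k$-invariant part of the $\ok$-MMP. Any correct argument must run the MMP directly over $k$ (and then Sarkisov's \'etale-local classification is no longer directly applicable to the individual contractions), which is exactly the extra work undertaken by Avilov and Oesinghaus.
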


The next lemma is well-known.

\begin{lem}
\label{lemconic1}
Let $\pi:X\to S$ be a morphism of connected normal projective varieties over $k$ whose generic fiber is a conic with class $\alpha\in\Br(k(S))$. Let~$\Delta\subset S$ be the degeneration divisor of $\pi$. Let $R\subset S$ be the ramification divisor of $\alpha$.

 One has $R\subset\Delta$.
If moreover $\pi$ is a standard conic bundle, then $R=\Delta$.
\end{lem}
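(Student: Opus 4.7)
The plan is to prove the two containments $R\subset\Delta$ and $\Delta\subset R$ separately, using Brauer group purity for the first and the Picard rank hypothesis for the second.

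For the containment $R\subset\Delta$, I would use purity of the Brauer group along codimension-one points of a regular scheme. If $D\subset S$ is a prime divisor not contained in $\Delta$, then $\pi$ is smooth over an open neighborhood $U\subset S$ of $\eta_D$, so $\pi^{-1}(U)\to U$ is a Brauer--Severi scheme of relative dimension~$1$. It corresponds to a class in $\Br(U)$ whose restriction to the generic point of $S$ is $\alpha$; in particular, $\alpha$ lies in the image of $\Br(\cO_{S,D})\to\Br(k(S))$, which by the localization sequence equals the kernel of the residue map $\partial_D$. Hence $D\not\subset R$.

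For the reverse inclusion under the standard conic bundle hypothesis, let $D$ be an irreducible component of $\Delta$. First, I would argue by a local computation in a model $\{q(x,y,z)=0\}\subset\P^2_{\cO_{S,D}}$ of the conic bundle that $X_{\eta_D}$ is a pair of distinct lines meeting transversally: a double-line fiber would make the Jacobian of the local equation vanish along a one-dimensional locus, contradicting the smoothness of $X$. The classical computation of the Brauer residue of a plane conic as the discriminant class of its defining quadratic form then identifies $\partial_D(\alpha)\in\kappa(D)^*/\kappa(D)^{*2}$ with the class of the minimal quadratic extension of $\kappa(D)$ over which the two components $L_1,L_2$ of $X_{\eta_D}$ become defined.

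It remains to exclude the possibility that $L_1$ and $L_2$ are already defined over $\kappa(D)$. If they were, their Zariski closures $E_1,E_2\subset X$ would be distinct prime divisors satisfying $E_1+E_2=\pi^*D$. Intersecting on the singular fiber yields $E_1\cdot L_1=-1$ and $E_1\cdot L_2=1$, while the anticanonical class $-K_X$ (which is $\pi$-ample) satisfies $-K_X\cdot L_1=-K_X\cdot L_2=1$, each $L_i$ being one half of the degree-$2$ conic fiber. Since pullbacks from $S$ pair trivially with curves contracted by $\pi$, these intersection numbers force $[E_1]$ and $[-K_X]$ to be linearly independent modulo $\pi^*\mathrm{Pic}(S)_{\Q}$, contradicting $\rho(X/S)=1$. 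This contradiction gives $\partial_D(\alpha)\neq 0$, whence $D\subset R$.

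The main obstacle I anticipate is the local analysis at $\eta_D$, needed both to rule out double-line fibers along $\Delta$ and to identify the Brauer residue with the splitting class of $L_1\cup L_2$. Both facts are classical in the theory of conic bundles but must be invoked with care; once they are in place, the purity statement and the Picard rank obstruction are essentially formal.
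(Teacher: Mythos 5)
Your proof is correct, but for the inclusion $\Delta\subset R$ it follows a genuinely different route from the paper's. The inclusion $R\subset\Delta$ is established via purity of the Brauer group exactly as in the paper. For $\Delta\subset R$ under the standard conic bundle hypothesis, you first rule out double-line fibres over the generic points of $\Delta$ via smoothness of $X$, then identify $\partial_D\alpha$ with the splitting class of the resulting pair of lines, and finally exclude the split case because the closures $E_1,E_2$ of the two lines would, paired against the two components of a closed degenerate fibre and against $-K_X$, force $\rho(X/S)\geq 2$. The paper instead notes that $X_s:=X\times_S\Spec(\cO_{S,s})$ is a relatively minimal regular model over the DVR $\cO_{S,s}$ that is not smooth, and invokes uniqueness of relatively minimal models to conclude that the generic conic admits no smooth model over $\cO_{S,s}$, so that $\alpha$ is ramified at $s$. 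Your version makes the role of the hypothesis $\rho(X/S)=1$ explicit, whereas the paper uses it implicitly: a split pair of lines in the closed fibre of $X_s$ would produce a $(-1)$-curve, so the asserted relative minimality of $X_s$ already rests on this hypothesis. The paper's route is shorter and sidesteps both the residue computation and the intersection-theoretic argument; yours is more self-contained, replacing the appeal to uniqueness of relatively minimal models of regular surfaces over a DVR with explicit local analysis.
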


\begin{proof}
Let $s\in S$ be the generic point of a divisor.  Let $\pi_s:X_s\to\Spec(\cO_{S,s})$ be the restriction of $\pi$. 
If $s\notin \Delta$, then $\pi_s$ is a smooth conic, so $\alpha$ lifts to $\Br(\cO_{S,s})$ and~$s\notin R$. Hence $R\subset \Delta$.
If $\pi$ is standard and~$s\in \Delta$, then $\pi_s$ is a relatively minimal regular surface which is not smooth.  By uniqueness of relatively minimal models, the generic fiber of~$\pi$ cannot extend to a smooth conic over $\Spec(\cO_{S,s})$. This shows that $s\in R$.
\end{proof}

The following lemma is \cite[Proposition 4.2]{Sarkisov}. We include a proof to emphasize that it holds over $k$.

\begin{lem}
\label{lemconic2}
Let $\pi:X\to S$ and $\pi^{\bullet}:X^\bullet\to S^{\bullet}$ be two standard conic bundles of dimension $3$ over $k$ with degeneration divisors $\Delta$ and $\Delta^{\bullet}$.  Let $\xi:X^\bullet\dashrightarrow X$ be a birational map and $\zeta:S^\bullet\to S$ be a birational morphism such that $\zeta\circ \pi^\bullet=\pi\circ \xi$.
Fix $m\geq 2$. If $|mK_{S}+\Delta|\neq\varnothing$ then $|mK_{S^{\bullet}}+\Delta^{\bullet}|\neq\varnothing$.
\end{lem}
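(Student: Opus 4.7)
The plan is to prove the stronger divisorial inequality $4K_{S^{\bullet}}+\Delta^{\bullet}\geq\zeta^{*}(4K_{S}+\Delta)$ on $S^{\bullet}$; once this is known, any nonzero section of $\cO_{S}(4K_{S}+\Delta)$ pulls back under $\zeta$ and embeds into $\cO_{S^{\bullet}}(4K_{S^{\bullet}}+\Delta^{\bullet})$ via the effective inclusion, yielding a nonzero section of the latter. As a first step I will compare $\Delta^{\bullet}$ with $\zeta^{*}\Delta$. The map $\xi$ restricts on generic fibers to a birational map, hence an isomorphism, between two smooth conics over $k(S)$, so the generic fibers of $\pi$ and $\pi^{\bullet}$ share a common Brauer class $\alpha\in\Br(k(S))=\Br(k(S^{\bullet}))$. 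By Lemma \ref{lemconic1}, both $\Delta$ and $\Delta^{\bullet}$ coincide with the ramification divisors of $\alpha$ on $S$ and $S^{\bullet}$ respectively. Since ramification is detected at generic points of divisors and $\zeta$ is an isomorphism in codimension one, the proper transform satisfies $\zeta^{-1}_{*}(\Delta)\subseteq\Delta^{\bullet}$, so I may write $\Delta^{\bullet}=\zeta^{-1}_{*}(\Delta)+G$ with $G$ an effective divisor supported on the exceptional locus of $\zeta$.

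The heart of the argument is a discrepancy computation on the exceptional components. Working over $\ok$, let $\{E_{j}\}$ be the $\zeta$-exceptional prime divisors. I expand $K_{S^{\bullet}}=\zeta^{*}K_{S}+\sum_{j}a_{j}E_{j}$ with $a_{j}\geq 1$ (as $\zeta$ is a birational morphism between smooth projective surfaces), $\zeta^{*}\Delta=\zeta^{-1}_{*}(\Delta)+\sum_{j}f_{j}E_{j}$ with $f_{j}\geq 0$, and $G=\sum_j g_j E_j$ with $g_j\geq 0$, so that
\[
4K_{S^{\bullet}}+\Delta^{\bullet}-\zeta^{*}(4K_{S}+\Delta)=\sum_{j}(4a_{j}+g_{j}-f_{j})E_{j}.
\]
The crucial input is that the pair $(S,\Delta)$ is log canonical, which is the classical fact that a reduced snc divisor on a smooth surface defines a log canonical pair. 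Rewriting the identity above as
\[
K_{S^{\bullet}}+\zeta^{-1}_{*}(\Delta)=\zeta^{*}(K_{S}+\Delta)+\sum_{j}(a_{j}-f_{j})E_{j},
\]
log canonicity translates into $a_{j}-f_{j}\geq-1$, i.e.\ $f_{j}\leq a_{j}+1$. Combining this with $a_{j}\geq 1$ yields $4a_{j}\geq a_{j}+1\geq f_{j}$, hence $4a_{j}+g_{j}-f_{j}\geq g_{j}\geq 0$, completing the effectivity argument.

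The main technical point I expect is the log canonicity inequality $f_{j}\leq a_{j}+1$: this is where the snc hypothesis on $\Delta$ is essential. Over $\ok$ it can be checked by factoring $\zeta$ into a sequence of blowups of closed points and verifying inductively that each blowup preserves the log smoothness of the pair (the multiplicity of $\Delta$ at any blowup center is at most $2$ by snc, which gives the base case $f\leq a+1=2$, and the inductive steps follow from the compatibility of discrepancies and total transforms under further blowups). Since effectivity of a $\Z$-divisor is preserved by extension of scalars, passing from $\ok$ back to $k$ requires no extra work.
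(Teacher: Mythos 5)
Your proof is correct, and while it reaches the same effectivity statement
$4K_{S^\bullet}+\Delta^\bullet \geq \zeta^*(4K_S+\Delta)$ as the paper, it gets there by a genuinely different route. Both arguments start by invoking Lemma~\ref{lemconic1} to identify $\Delta$ and $\Delta^\bullet$ with the ramification divisors of the common Brauer class, so that $\zeta^{-1}_*(\Delta)\subset\Delta^\bullet$ with effective exceptional remainder. The paper then factors $\zeta$ into point blow-ups and, at each step, computes the coefficient of the single exceptional curve explicitly, using that an snc curve on a smooth surface passes through a point with multiplicity $\leq 2$, which gives a coefficient in $\{-2,-1,0\}$ for $R^\bullet-\zeta^*R$. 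Your argument instead treats all exceptional divisors $E_j$ at once, repackaging the multiplicity bound as the log canonicity of the pair $(S,\Delta)$ (i.e.\ $a_j-f_j\geq -1$) and combining it with the surface fact $a_j\geq 1$ to get $4a_j+g_j-f_j\geq 3a_j-1\geq 2>0$. The upshot is that you sidestep the induction over intermediate blow-ups (and thus any bookkeeping about how the ramification divisor looks on intermediate surfaces), at the price of invoking the general theory of discrepancies; the paper keeps the computation entirely elementary but carries it out one blow-up at a time. Your closing remark about verifying $f_j\leq a_j+1$ by reducing again to blow-ups is slightly circular — it rederives the log canonicity you quote as a black box and is exactly where the two proofs converge — but it does not affect the correctness of the main argument.
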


\begin{proof}
Let $\alpha\in\Br(k(S))=\Br(k(S^{\bullet}))$ be the class of the generic fibers of $\pi$ and~$\pi^{\bullet}$.
Let $R\subset S$ and $R^{\bullet}\subset S^{\bullet}$ be its ramification divisors.
By Lemma \ref{lemconic1},  one has~$R=\Delta$ and $R^{\bullet}=\Delta^{\bullet}$. These divisors are therefore snc. Under these hypotheses, our goal is to show that if $|mK_{S}+R|\neq\varnothing$ then $|mK_{S^{\bullet}}+R^{\bullet}|\neq\varnothing$.

Writing $\zeta:S^\bullet\to S$ as a composition of blow-ups of closed points, we reduce to the case where $\zeta$ is the blow-up of a single closed point with exceptional divisor $E\subset S^{\bullet}$. Then $K_{S^{\bullet}}=\zeta^*K_S+E$ and $R^{\bullet}=\zeta^*R+aE$ with $a\in\{-2,-1,0\}$ (depending on how many components of $R$ contain the blow-up center and on whether $E\subset R^{\bullet}$ or not). Consequently, $$mK_{S^{\bullet}}+R^{\bullet}=\zeta^*(mK_{S}+R)+(a+m)E.$$
As~$a+m\geq 0$, the hypothesis that $|mK_{S}+R|\neq\varnothing$ implies $|mK_{S^{\bullet}}+R^{\bullet}|\neq\varnothing$.
\end{proof}

\begin{rem}
A more precise analysis would show that Lemma \ref{lemconic2} remains true for $m=1$ (over $k=\C$, see \cite[Lemma 1]{Iskovskikhconic}). We will not need this refinement.
\end{rem}

\section{Birational rigidity over nonclosed fields}
\label{sec4}

We now explain how to adapt the proof of Sarkisov's theorem over nonclosed fields (see Theorem \ref{th4}).
As in Section \ref{sec3}, we let $k$ be a field of characteristic $0$ with algebraic closure $\ok$ and $\Gamma_k:=\Gal(\ok/k)$ be its absolute Galois group.

\subsection{Running the Sarkisov program}

In this paragraph,  following Corti \cite{Corti1}, we explain
 how to run the Sarkisov program in dimension $3$.
Although some version of the Sarkisov program is now known in arbitrary dimension \cite[Theorem~1.1]{HX}, it is the precise algorithm described in \cite{Corti1} and sketched in Lemmas~\ref{Sarkisov1} and \ref{Sarkisov2} below that is used in applications to birational rigidity.

\begin{thm}
\label{thSarkisovprogram}
A birational map $\phi:X\dashrightarrow X'$ between Mori fiber spaces $\pi:X\to S$ and $\pi':X'\to S'$ of dimension $3$ over $k$ is a composition of Sarkisov links.
\end{thm}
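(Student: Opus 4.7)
The plan is to follow Corti's algorithmic proof \cite{Corti1}, substituting at each step the nonclosed-field MMP tools recorded in \S\ref{parMMP} for their counterparts over $\ok$. First, I would attach to $\phi$ a \emph{Sarkisov degree}. Choose a sufficiently ample divisor $A'$ on $S'$ and divisible integer $\mu'$ so that $\cH':=|{-\mu'K_{X'}}+(\pi')^*A'|$ is very ample, and set $\cH:=(\phi^{-1})_*\cH'$. Using $\rho(X/S)=1$ and the $\pi$-ampleness of $-K_X$, one writes $\cH\equiv-\mu K_X+\pi^*A$ as in the setup of Theorem \ref{NF}. The Sarkisov degree is then the lexicographically ordered triple $(\mu,c,e)$, where $c$ is the canonical threshold of $(X,\frac{1}{\mu}\cH)$ and $e$ counts the crepant divisorial valuations. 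By the Noether--Fano inequalities (Theorem \ref{NF}), either $\phi$ is already an isomorphism of Mori fiber spaces---in which case the empty composition suffices---or at least one of the conditions $\mu>\mu'$, $(X,\frac{1}{\mu}\cH)$ non-canonical, or $K_X+\frac{1}{\mu}\cH$ not nef over $T$ must hold.

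Next, I would produce a Sarkisov link that strictly lowers $(\mu,c,e)$. In the non-canonical case, one selects a \emph{maximal singularity}: a $\Gamma_k$-orbit of divisorial valuations over $X$ computing $c$. Extracting this orbit by a $k$-divisorial contraction $Z\to X$---available via finite generation of the associated adjoint ring \cite{Birkar,HaconXu} followed by a $k$-$\Q$-factorialization---and running a $(K_Z+c\cH_Z)$-MMP over $T$, using the cone and contraction theorems (Theorems \ref{coneth}--\ref{contractionth}), flip existence, and the termination of $3$-fold canonical flips recorded in \S\ref{parMMP}, produces a diagram of one of the four shapes in~(\ref{links}). In the nef-failure case, running a $(K_X+\frac{1}{\mu}\cH)$-MMP directly on $X$ over $T=S$ yields a link of type III or IV. In either situation, the numerical computations of \cite[\S 5]{Corti1} show that the Sarkisov degree strictly drops after composition with the link; since $\mu$ takes values in a discrete set of rationals with controlled denominator and $(c,e)$ lie in well-ordered sets, the algorithm terminates, yielding the desired decomposition.

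The main obstacle is that being a Mori fiber space or a Sarkisov link is \emph{not} invariant under base change, since the relative Picard rank one condition and $\Q$-factoriality can both jump upon extension to $\ok$. One therefore cannot simply invoke Corti's theorem over $\ok$ and descend. The resolution is to run every MMP step directly over $k$: to contract only $\Gamma_k$-invariant extremal faces and to extract only $\Gamma_k$-stable divisorial valuations. All the required tools are assembled in \S\ref{parMMP}, and the local numerical invariants used in Corti's bookkeeping (discrepancies, canonical thresholds, intersection numbers with $\cH$) are Galois-invariant, so no new geometric input beyond \S\ref{parMMP} is needed and the induction transfers verbatim.
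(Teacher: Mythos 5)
Your proposal follows the same strategy as the paper: run Corti's algorithm directly over $k$, replacing each MMP invocation by its nonclosed-field counterpart from \S\ref{parMMP}, track the Sarkisov degree $(\mu,c,e)$, and resolve the base-change obstruction by contracting only $\Gamma_k$-invariant extremal faces and extracting only $\Gamma_k$-stable valuations. This is precisely how the paper proceeds, packaged into Lemma~\ref{Sarkisov1} (type I/II links until the pair becomes canonical) and Lemma~\ref{Sarkisov2} (type III/IV links until $\mu$ drops or one reaches $X'$), after which the outer loop terminates because the possible values of $\mu$ have bounded denominators by \cite{Kawamata}.

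However, your termination argument is imprecise in a way that matters. You assert that ``the Sarkisov degree strictly drops after composition with the link'' and then conclude termination from $\mu$ having bounded denominators and $(c,e)$ lying in ``well-ordered sets.'' Neither claim is quite right. First, type III/IV links need not decrease the Sarkisov degree at all: in the untwisting loop of Lemma~\ref{Sarkisov2} one produces links with $\hat\mu_1=\hat\mu$, the new pair still canonical, and the map not yet square, and termination of that subloop rests on termination of canonical $3$-fold flips, not on lexicographic descent. Second, in the type I/II loop, the canonical thresholds $c$ appearing form a subset of $\Q$ that is not well-ordered; the paper instead relies on Chen's ACC property for canonical thresholds in dimension $3$ \cite[Theorem~1.2]{Chen} (or, in Corti's original treatment, \cite[Theorem~6.1]{Corti1}). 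Both of these inputs are essential and cannot be replaced by the discreteness/well-ordering heuristic you invoke; without them the algorithm could a priori run forever.
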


\begin{proof}
As in \S\ref{parMFS},  we choose a sufficiently ample divisor $A'$ on $S'$ and a sufficiently large and divisible integer $\mu'$, so that the linear system $\cH':=|-\mu'K_{X'}+(\pi')^{*}A'|$ on~$X'$ is very ample. 
We let $A$ be a divisor class on $S$ and $\mu$ be a positive rational number such that $\cH:=(\phi^{-1})_*\cH'$ is numerically equivalent to $-\mu K_X+\pi^*A$ (and we use similar notation for other Mori fiber spaces birational to $X'$).

Lemmas \ref{Sarkisov1} and \ref{Sarkisov2} provide a composition of Sarkisov links $X\dashrightarrow \check{X}$ between $\pi:X\to S$ and another Mori fiber space $\check{\pi}:\check{X}\to\check{S}$ such that either $\check{\mu}<\mu$ 
(where~$\check{\mu}$ is defined in the same way as above)
or the induced birational map $\check{X}\dashrightarrow X'$ is an isomorphism of Mori fiber spaces. 
As the rational numbers $\mu$ that can appear have bounded denominators by \cite{Kawamata}, iterating this process finitely many times proves the theorem.
\end{proof}

\begin{lem}
\label{Sarkisov1}
There exist a Mori fiber space $\hat{\pi}:\whX\to\whS$ and a composition ${\hat{\phi}:X\dashrightarrow \whX}$ of Sarkisov links of types I and II such that $\hat{\mu}\leq\mu$ and $(\whX,\frac{1}{\hat{\mu}}\hat{\cH})$ has canonical singularities.
\end{lem}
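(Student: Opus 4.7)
The plan is to iterate Sarkisov links of type I and II until the transformed pair becomes canonical, following Corti's algorithm \cite[\S 4--5]{Corti1} adapted to the nonclosed-field setting via the MMP framework of \S\ref{parMMP}. If $(X, \tfrac{1}{\mu}\cH)$ is already canonical we set $\whX = X$. Otherwise there exists a geometric valuation $E$ over $X_{\ok}$ with $a(E, X) < \tfrac{1}{\mu}\,\mathrm{mult}_E\cH$, a so-called \emph{maximal singularity}; the set of such valuations is $\Gamma_k$-invariant, and we fix a single $\Gamma_k$-orbit $\{E_1, \ldots, E_r\}$.

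The first step is to extract this orbit by a divisorial contraction $q:Z\to X$ defined over $k$, whose exceptional locus geometrically equals $\bigcup_i E_i$. Corti constructs such a $q$ over $\ok$ by taking a log-resolution of the pair and running a suitable relative MMP so that only the $E_i$ remain exceptional \cite[\S 2.10]{Corti1}. Since the orbit is Galois-invariant, each step of this construction is $\Gamma_k$-equivariant, and by \S\ref{parMMP} the whole procedure descends to $k$. The resulting $q$ has relative Picard rank $1$ over $k$, although possibly not over $\ok$ --- exactly the situation allowed by Definition \ref{defMFS}.

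The second step is to turn $q$ into a Sarkisov link of type I or II by running a further MMP on $Z$ over an appropriate base (either $S$ itself or a target of a contraction of $S$, depending on the image of $\bigcup_i E_i$ in $S$). This terminates by \S\ref{parMMP} and produces a new Mori fiber space $\pi_1:X_1\to S_1$; the geometric analysis of \cite[\S 4]{Corti1} applies verbatim. The Noether--Fano inequality (Theorem \ref{NF}) is ground-field-independent and yields $\mu_1\leq\mu$.

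Iterate this procedure on $(X_1, \cH_1)$; termination follows from a strictly decreasing difficulty invariant, taken to be Corti's geometric difficulty \cite[\S 5]{Corti1} counted on $X_{\ok}$, which is insensitive to base change. The principal obstacle is ensuring the Galois-equivariance of the extraction in Step 1 --- that a full $\Gamma_k$-orbit of maximal singularities can be removed by a single $k$-rational divisorial contraction of relative Picard rank one --- and this is precisely what the nonclosed-field MMP of \S\ref{parMMP} delivers.
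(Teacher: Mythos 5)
Your proof follows the same route as the paper's: iterate Corti's untwisting procedure \cite[(5.4.1)]{Corti1} to produce type I/II links, using the fact that the extremal blowup of \cite[Proposition 2.10]{Corti1} can be constructed via the MMP over $k$ (hence over a nonclosed field), and then invoke termination of the process via Corti's decreasing invariants. The elaboration on Galois descent --- extracting a full $\Gamma_k$-orbit of maximal singularities by a single $k$-rational extremal extraction of relative Picard rank one, which may have larger rank over $\ok$ --- is exactly the point the paper makes in one sentence, so the substance agrees.

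Two minor imprecisions are worth correcting. First, the inequality $\hat\mu\leq\mu$ along type I/II links is not a consequence of the Noether--Fano inequalities (Theorem \ref{NF}); those compare $\mu$ with the \emph{target} invariant $\mu'$, not with intermediate ones. Rather, $\hat\mu\leq\mu$ (in fact typically $\hat\mu=\mu$ during the untwisting stage) is a direct feature of how the strict transform of $\cH$ is controlled under the link in \cite[(5.4.1)]{Corti1}. Second, what decreases during this stage is not a single geometric ``difficulty'' but the full lexicographic triple $(\mu,c,e)$ of \cite[Theorem 6.1]{Corti1}, whose DCC relies on boundedness of denominators for $\mu$ and on the ACC for canonical thresholds in dimension $3$ (established by Chen \cite[Theorem 1.2]{Chen}); since all three components are computed geometrically, they are indeed insensitive to base change, so the spirit of your termination argument is right, but it is the triple that matters and not a single scalar invariant. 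Also, Definition \ref{defMFS} is about Mori fiber spaces, not divisorial extractions, so the parenthetical reference there should simply be to the general feature of running the MMP over $k$ discussed in \S\ref{parMMP}.
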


\begin{proof}
If $(X,\frac{1}{\mu}\cH)$ does not have canonical singularities, then  \cite[(5.4.1)]{Corti1} describes a procedure to construct a Sarkisov link $X\dashrightarrow X_1$ of type I or II between $\pi:X\to S$ and another Mori fiber space $\pi_1:X_1\to S_1$.  This construction works over a nonclosed field $k$ as one can apply the MMP over $k$ (see \S\ref{parMMP}). In particular, the extremal blowup that is used in the proof exists over $k$ since its construction in \cite[Proposition~2.10]{Corti1} only depends on running the MMP.

The procedure decreases a triple of invariants $(\mu,c,e)$ that is introduced there.  As this triple cannot decrease infinitely many times (this is shown in \cite[Theorem~6.1]{Corti1} but it nowadays suffices to apply the ACC property for canonical thresholds in dimension $3$ proven by Chen \cite[Theorem 1.2]{Chen}), iterating the procedure finitely many times yields the desired Mori fiber space $\hat{\pi}:\hat{X}\to\whS$.
\end{proof}

\begin{lem}
\label{Sarkisov2}
Let $\hat{\pi}:\whX\to\whS$ be as constructed in Lemma \ref{Sarkisov1}.
There exist a Mori fiber space $\check{\pi}:\check{X}\to\check{S}$ and a composition ${\check{\phi}:\whX\dashrightarrow \check{X}}$ of Sarkisov links of types III and IV such that either $\check{\mu}<\hat{\mu}$, or $\check{\phi}$ is square and $\phi\circ\hat{\phi}^{-1}\circ\check{\phi}^{-1}:\check{X}\dashrightarrow X'$ is an isomorphism of Mori fiber spaces. 
\end{lem}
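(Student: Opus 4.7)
The plan is to adapt the two-ray game algorithm of Corti \cite[(5.4.2)--(5.4.3)]{Corti1}, systematically replacing every appeal to the MMP by its version over $k$ developed in \S\ref{parMMP}. The starting observation is that, thanks to Lemma \ref{Sarkisov1}, the pair $(\whX,\frac{1}{\hat\mu}\hat\cH)$ is canonical, so the first hypothesis of the Noether--Fano criterion (Theorem \ref{NF}(ii)) is already met at $\whX$.

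First I would test nefness of $K_{\whX}+\frac{1}{\hat\mu}\hat\cH$. If it is nef, Theorem \ref{NF}(ii) applied to the birational map $\hat\phi\circ\phi^{-1}:\whX\dashrightarrow X'$ yields that it is an isomorphism of Mori fiber spaces, and we may take $\check X:=\whX$ with $\check\phi$ the identity (an empty composition of links), landing in the second alternative of the lemma. Otherwise, since $\hat\cH\equiv -\hat\mu K_{\whX}+\hat\pi^*\hat A$, the class $K_{\whX}+\frac{1}{\hat\mu}\hat\cH\equiv\frac{1}{\hat\mu}\hat\pi^*\hat A$ is pulled back from $\hat S$, so any $(K_{\whX}+\frac{1}{\hat\mu}\hat\cH)$-negative extremal ray of $\overline{\NE}(\whX)$ must dominate a curve in $\hat S$. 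The cone theorem over $k$ (Theorem \ref{coneth}) produces such a ray~$R$, defined over $k$.

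Next I would play the two-ray game starting from $R$. By Theorem \ref{contractionth}, the contraction $c_R:\whX\to V$ exists over $k$, and there are three standard cases: $c_R$ is of fiber type (giving directly a Sarkisov link $\whX\dashrightarrow\check X^{(1)}$ of type IV), divisorial (giving a link of type III, with $\check S^{(1)}$ read off from the Stein factorization of $\hat\pi\circ c_R^{-1}$), or small, in which case I take a log-flip --- existing in dimension $3$ thanks to \cite{Birkar,HaconXu} and defined over $k$ as explained in \S\ref{parMMP} --- and continue. Termination of these log-flips in dimension $3$ \cite[Theorem~4.15]{KollarMatsuki} guarantees that the procedure terminates with a single Sarkisov link $\whX\dashrightarrow \check X^{(1)}$ defined over $k$. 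That $\check X^{(1)}$ has $\Q$-factorial terminal singularities and that the resulting $\check\pi^{(1)}:\check X^{(1)}\to\check S^{(1)}$ is a Mori fiber space over $k$ (not just over $\ok$) follows from the MMP infrastructure in \S\ref{parMMP}, where these properties are preserved over the ground field.

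Finally I would compare the new invariant $\check\mu^{(1)}$ with $\hat\mu$ via Theorem \ref{NF}(i). If $\check\mu^{(1)}<\hat\mu$, set $\check X:=\check X^{(1)}$ and we are in the first alternative. Otherwise $\check\mu^{(1)}=\hat\mu$, the pair $(\check X^{(1)},\frac{1}{\hat\mu}\check\cH^{(1)})$ remains canonical (this is preserved by type III/IV links in the equality case), and we iterate the entire procedure with $\whX$ replaced by $\check X^{(1)}$, accumulating the links into the composition $\check\phi$. The main obstacle is to terminate this inner loop in which $\hat\mu$ refuses to drop: as in \cite[(5.4.3)]{Corti1} this is handled by the strict decrease of an auxiliary integer invariant of the pair (in modern language, a consequence of the ACC for canonical thresholds in dimension $3$ \cite[Theorem 1.2]{Chen}), which after finitely many $\hat\mu$-preserving links forces either a strict drop of $\hat\mu$, or nefness of $K+\frac{1}{\hat\mu}\hat\cH$ on the current model, at which point Theorem \ref{NF}(ii) gives the required isomorphism of Mori fiber spaces $\check X\dashrightarrow X'$, and the whole composition $\check\phi$ is square by construction.
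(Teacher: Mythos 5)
Your overall architecture matches the paper's: delegate the construction of a type III/IV link to Corti's two-ray game in \cite[(5.4.2)]{Corti1}, observe that every MMP ingredient used there (cone and contraction theorems, existence of flips, Noether--Fano) has been set up over $k$ in \S\ref{parMMP} and Theorem~\ref{NF}, and iterate until $\mu$ drops or an isomorphism with $X'$ is reached.

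However, your termination argument for the inner loop is the wrong one. You invoke the lexicographic decrease of Corti's triple and the ACC for canonical thresholds \cite[Theorem~1.2]{Chen}. That is the termination argument for \emph{Lemma~\ref{Sarkisov1}}, i.e.\ for the chain of type~I/II links that brings the pair $(X,\frac{1}{\mu}\cH)$ to a canonical model at fixed $\mu$. Once we are in the setting of Lemma~\ref{Sarkisov2}, the pair $(\whX,\frac{1}{\hat\mu}\hat\cH)$ is already canonical, and the $\hat\mu$-preserving links of type~III/IV keep it canonical; the canonical threshold stays $\geq 1$ along the whole chain, so the ACC gives no control whatsoever over its length. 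The chain of $\hat\mu$-preserving type~III/IV links is in fact a sequence of steps of a $(K+\frac{1}{\hat\mu}\hat\cH)$-MMP, and the correct termination input --- as the paper states, following the end of \cite[(5.4.2)]{Corti1} --- is \emph{termination of canonical flips in dimension~$3$} (recalled as applicable over $k$ in \S\ref{parMMP}, via \cite[Theorem~4.15]{KollarMatsuki}). You do cite termination of log-flips earlier, to bound the length of the flip chain inside a single link, but that is a different, finer loop; it is this outer loop of $\hat\mu$-preserving links whose termination you need, and for which you substituted the wrong theorem. Swapping the ACC appeal for termination of canonical flips repairs the argument.
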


\begin{proof}
If $\phi\circ\hat{\phi}^{-1}$ is not an isomorphism of Mori fiber spaces, then \cite[(5.4.2)]{Corti1} describes a procedure to construct a Sarkisov link $\psi:\whX\dashrightarrow\whX_1$ of type III or~IV between $\hat{\pi}:\whX\to\whS$ and another Mori fiber space $\hat{\pi}_1:\whX_1\to\whS_1$ such that either~$\hat{\mu}_1<\hat{\mu}$ and $\psi$ is square, or~$\hat{\mu}_1=\hat{\mu}$,  the pair $(\whX_1,\frac{1}{\hat{\mu}_1})$ is still canonical and $\psi$ is not square. This construction works over~$k$ by applying the MMP as explained in~\S\ref{parMMP} (and most crucially the cone and contraction theorems as stated in Theorems~\ref{coneth} and~\ref{contractionth}) and by making use of the Noether--Fano inequalities in the form of Theorem~\ref{NF}.  One then iterates this procedure until either the invariant~$\mu$ drops,  or one reaches the Mori fiber space ${\pi':X'\to S'}$ (in which case $\check{\phi}$ is square).  As is explained at the end of \cite[(5.4.2)]{Corti1}, this process must indeed terminate after finitely many steps by termination of canonical flips in dimension~$3$.
\end{proof}

\subsection{Birational rigidity of conic bundles}

Here is at last Sarkisov's birational rigidity theorem \cite{Sarkisov}. We follow the exposition of Prokhorov \cite[Section~12]{Prokhorov}.

\begin{thm}
\label{th4}
Let $k$ be a field of characteristic $0$. Let $\pi:X\to S$ be a standard conic bundle of dimension $3$ over $k$ with discriminant curve $\Delta\subset S$. 
Assume that $|4K_S+\Delta|\neq\varnothing$. Then $\pi:X\to S$ is birationally superrigid.
\end{thm}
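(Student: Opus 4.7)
The plan is to adapt to the nonclosed field $k$ the proof of Sarkisov's theorem presented by Corti and Prokhorov (see \cite[Section~12]{Prokhorov}), using the MMP, Sarkisov program, and Noether--Fano inequalities developed in Section \ref{sec3}.

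Given a birational map $\phi:X\dashrightarrow X'$ to the total space of another Mori fiber space~$\pi':X'\to S'$, I want to show that $\phi$ is square. Following \S\ref{parMFS}, I would fix a very ample linear system $\cH':=|-\mu' K_{X'}+(\pi')^{*}A'|$ on $X'$, form the strict transform $\cH:=(\phi^{-1})_{*}\cH'$, and write $\cH\equiv-\mu K_X+\pi^{*}A$ for some $\mu>0$ and a divisor class $A$ on $S$. By the Noether--Fano criterion (Theorem \ref{NF}(ii)), it suffices to prove that $(X,\tfrac{1}{\mu}\cH)$ has canonical singularities and that $K_X+\tfrac{1}{\mu}\cH$ is nef, for these two conditions force $\phi$ to be an isomorphism of Mori fiber spaces, hence in particular square.

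The heart of the argument is then a maximal singularity and test-class nefness analysis for the pair $(X,\tfrac{1}{\mu}\cH)$. Suppose for contradiction that one of these properties fails. If $K_X+\tfrac{1}{\mu}\cH$ is not nef, the cone theorem (Theorem \ref{coneth}) produces a $(K_X+\tfrac{1}{\mu}\cH)$-negative extremal ray of $\overline{\NE}(X)$ generated by the $\Gamma_k$-orbit of a rational curve. If $(X,\tfrac{1}{\mu}\cH)$ is not canonical, a maximal center may be chosen $\Gamma_k$-stable (since the locus of nonpositive log discrepancy is Galois-invariant), hence defines a closed subscheme of $X$ over $k$, and its extremal blow-up exists over $k$ by \S\ref{parMMP}. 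In both situations one then runs a two-ray game: a composition of log flips, which exist and terminate in dimension~$3$, followed by a divisorial or fiber-type extremal contraction produced by Theorem \ref{contractionth}. The numerical output is a Galois-stable curve $\ell\subset S$ along which the standard intersection-theoretic computation of \cite[Sections 10--11]{Prokhorov} yields $(4K_S+\Delta)\cdot\ell<0$, contradicting the existence of a global section of $4K_S+\Delta$.

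The main obstacle is bookkeeping: at every Mori-theoretic step used in the proof of \cite[Sections 10--12]{Prokhorov}, one has to verify that the construction descends from $\ok$ to $k$ and that the intersection numbers and multiplicities entering the final numerical inequality are invariant under base change. Galois equivariance of extremal blow-ups, log flips, and two-ray games follows from \S\ref{parMMP}; stability of intersection numbers is automatic once one views divisors on $X$ and $S$ as Galois-invariant divisors on $X_{\ok}$ and $S_{\ok}$; and the good behavior of the conic bundle invariants $\Delta$ and $|4K_S+\Delta|$ under the birational base changes induced by Sarkisov links is recorded in Lemmas \ref{lemconic1} and \ref{lemconic2}. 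Once these verifications are in place, the inequalities derived over $\ok$ in \cite{Prokhorov} transfer to $k$ verbatim, and the proof goes through.
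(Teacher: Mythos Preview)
Your proposal has a structural gap. You set out to verify the Noether--Fano hypotheses on $(X,\tfrac{1}{\mu}\cH)$ directly, so that $\phi$ would be an \emph{isomorphism} of Mori fiber spaces. But birational superrigidity only asserts that $\phi$ is \emph{square}, and that is all one can hope for: the conic bundle $\pi:X\to S$ admits plenty of square birational self-maps that are not biregular (fibrewise elementary transformations, for instance). For such $\phi$ the pair $(X,\tfrac{1}{\mu}\cH)$ is genuinely non-canonical, yet no contradiction with $|4K_S+\Delta|\neq\varnothing$ arises, and no curve $\ell\subset S$ with $(4K_S+\Delta)\cdot\ell<0$ is produced by the two-ray game. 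So the implication ``failure of Noether--Fano on $X$ $\Rightarrow$ a bad curve on $S$'' that you assert is false in general.

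The paper fixes this by changing the logical flow: one assumes for contradiction that $\phi$ is \emph{not square} and runs the full Sarkisov program (Theorem~\ref{thSarkisovprogram}, Lemmas~\ref{Sarkisov1} and~\ref{Sarkisov2}). Links of type~I and~II starting from a $\Q$-conic bundle are automatically square, so one passes through a chain of square links to an intermediate $\Q$-conic bundle $\bar\pi:\bar X\to\bar S$, and the first non-square link is necessarily of type~III or~IV with base $T$ a point or a curve. The numerical inequality (via $\bar\pi_*(\bar\cH^2)\equiv 4\bar\mu\bar A-\bar\mu^2(4K_{\bar S}+\bar\Delta)$) is then derived on $\bar S$, not on the original $S$, and shows that $4K_{\bar S}+\bar\Delta$ is not effective. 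The role of Theorem~\ref{thstandard} and Lemmas~\ref{lemconic1} and~\ref{lemconic2} is precisely to transport the hypothesis $|4K_S+\Delta|\neq\varnothing$ across to $\bar S$ through a common standard model $S^\bullet$; you cite these lemmas but do not place them where they are actually needed. In short, the contradiction lives on an intermediate base surface reached after many links, not on $S$ after a single two-ray game.
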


\begin{proof}
Let $\pi':X'\to S'$ be a Mori fiber space and let $\phi:X\dashrightarrow X'$ be a birational map. Assume for contradiction that $\phi$ is not square.

Write $\phi$ as a composition of Sarkisov links using the algorithm of the proof of Theorem \ref{thSarkisovprogram}. We keep the notation introduced there and we let $\hat{\pi}:\whX\to\whS$ and $\check{\pi}:\check{X}\to\check{S}$ be as in Lemmas \ref{Sarkisov1} and \ref{Sarkisov2}.  A Sarkisov link of type I or II between a $\Q$-conic bundle and another Mori fiber space must be square and lead to another $\Q$-conic bundle (in view of the diagrams (\ref{links})). It follows that $\hat{\pi}:\whX\to\whS$ is a $\Q$-conic bundle and that the induced birational map $\whX\dashrightarrow X'$ is not square.

Consequently,  the links appearing in the factorization of ${\check{\phi}}$ as a composition of Sarkisov links provided by Lemma \ref{Sarkisov2} are all square,  except for the last one which decreases the invariant $\mu$. We denote this last link (of type~III or IV) by $\psi:\bar{X}\dashrightarrow\check{X}$ (between some $\Q$-conic bundles $\bar{\pi}:\bar{X}\to\bar{S}$ and $\check{\pi}:\check{X}\to\check{S}$). 
As $\psi$ has type~III or~IV and is not square, its base $T$ (see (\ref{links})) must be a point or a curve. 

Let $\bar{\Delta}\subset\bar{S}$ be the degeneration divisor of $\bar{\pi}$.
We claim that $4K_{\bar{S}}+\bar{\Delta}$ is not effective.
 Indeed,  it follows from \cite[Lemma 3.11]{Prokhorov} (applied over $\ok$) that $-\bar{\pi}_*K_{\bar{X}}^2\equiv 4K_{\bar{S}}+\bar{\Delta}$.  The projection formula then implies that
\begin{equation}
\label{pushforward}
\bar{\pi}_*(\bar{\cH}^2)\equiv 4\bar{\mu}\bar{A}-\bar{\mu}^2(4K_{\bar{S}}+\bar{\Delta}).
\end{equation} 

If the base of $\psi$ is a point, then $\rho(\bar{S})=1$. Since $\bar{A}$ is not nef by Theorem~\ref{NF}~(ii) (hence antiample as $\rho(\bar{S})=1$) and $\bar{\pi}_*(\bar{\cH}^2)$ is effective (hence nef as $\rho(\bar{S})=1$), we see from (\ref{pushforward}) that $-(4K_{\bar{S}}+\bar{\Delta})$ is ample. In particular, 
$4K_{\bar{S}}+\bar{\Delta}$ is not effective.

If the base $T$ of $\psi$ is a curve,  we let $F$ denote the class of a fiber of the extremal contraction $p:\bar{S}\to T$.  The very construction of the link $\psi$ given in \cite[(5.4.2)]{Corti1} implies that $\bar{A}\cdot F<0$. In addition,  $\bar{\pi}_*(\bar{\cH}^2)\cdot F\geq 0$ because $\bar{\pi}_*(\bar{\cH}^2)$ is effective and the fibers of $p:\bar{S}\to T$ cover $\bar{S}$.  We deduce from (\ref{pushforward}) that $(4K_{\bar{S}}+\bar{\Delta})\cdot F<0$. Consequently, the class $4K_{\bar{S}}+\bar{\Delta}$ is not effective in this case either.

To conclude, we use Theorem \ref{thstandard} to find a standard conic bundle $\pi^{\bullet}:X^\bullet\to S^{\bullet}$, as well as birational maps $\xi:X^\bullet\dashrightarrow X$ and $\bar{\xi}:X^\bullet\dashrightarrow \bar{X}$
and birational morphisms $\zeta:S^\bullet\to S$ and $\bar{\zeta}:S^\bullet\to \bar{S}$ such that 
$\zeta\circ \pi^\bullet=\pi\circ\xi$ and $\bar{\zeta}\circ \pi^\bullet=\bar{\pi}\circ\bar{\xi}$.

 Let $\Delta$ and~$\Delta^{\bullet}$ be the degeneration divisors of $\pi$ and $\pi^{\bullet}$.  As ${|4K_S+\Delta|\neq\varnothing}$, Lemma~\ref{lemconic2} applied with $m=4$ implies that $|4K_{S^{\bullet}}+\Delta^{\bullet}|\neq\varnothing$. Since $\bar{S}$ is normal, there is an open subset ${U\subset \bar{S}}$ whose complement has codimension $\geq 2$ such that~$\bar{\zeta}|_{\bar{\zeta}^{-1}(U)}:\bar{\zeta}^{-1}(U)\to U$ is an isomorphism. As~$\bar{\zeta}_*\Delta^{\bullet}\subset\bar{\Delta}$ (because $\bar{\zeta}_*\Delta^{\bullet}$ is the ramification divisor of the Brauer class of the generic fiber of $\bar{\pi}$ by Lemma \ref{lemconic1}), any nonzero section of~$4K_{S^{\bullet}}+\Delta^{\bullet}$ can be restricted to $\zeta^{-1}(U)$, then descended to~$U$, and finally extended to~$\bar{S}$ to give rise to a nonzero section of~$4K_{\bar{S}}+\bar{\Delta}$. This is the required contradiction.
\end{proof}

\subsection{An application}

In this paragraph, we apply Theorem \ref{th4} to prove the following result, which recovers Theorem \ref{th3intro} when $k$ is a real closed field and $a=-1$.

\begin{thm}
\label{th3}
Let $k$ be a field of characteristic $0$.  Fix $a\in k^*\setminus (k^*)^2$ and $f\in k[v,w]$ of even degree $d\geq 12$. If the closure $C\subset\P^2_k$ of
 the affine curve $\{f=0\}$ is nodal, then the algebraic variety with affine equation
$x^2-ay^2=f(v,w)$ is $k[\sqrt{a}]$-rational but not $k$-rational.
\end{thm}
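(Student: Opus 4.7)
The $k[\sqrt a]$-rationality is immediate: over $k[\sqrt a]$ the defining equation factors as $(x+\sqrt a\,y)(x-\sqrt a\,y)=f(v,w)$, and setting $u:=x+\sqrt a\,y$ expresses $x-\sqrt a\,y=f(v,w)/u$, which exhibits the variety as $k[\sqrt a]$-birational to $\A^3_{k[\sqrt a]}$ with coordinates $(u,v,w)$.

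For the non-$k$-rationality, the plan is to exhibit a standard conic bundle model $\pi\colon X\to S$ birational to the given threefold, and then to invoke Theorem~\ref{th4}. I take $\zeta\colon S\to\P^2_k$ to be the blow-up at the reduced $\Gamma_k$-invariant $0$-dimensional subscheme of nodes of $C$; since $C$ is nodal, the strict transform $\tilde C\subset S$ is smooth. One then constructs a standard conic bundle $\pi\colon X\to S$ whose generic fiber is the conic $x^2-ay^2-f(v,w)t^2=0$ over $k(S)$, after an appropriate twist of the ambient $\P^2$-bundle by the exceptional divisors of $\zeta$ so that the degeneration divisor equals exactly $\Delta=\tilde C$. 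Smoothness of $X$ and the snc property of $\Delta$ are then automatic, and the relative Picard rank $\rho(X/S)=1$ over $k$ holds because the two components of each geometric degenerate fiber are interchanged by the involution $\sqrt a\mapsto-\sqrt a$ and therefore fail to descend individually to $k$. Note that $\rho(X_{\ok}/S_{\ok})\geq 2$, so the nonclosed-field version of Sarkisov's theorem (Theorem~\ref{th4}) is genuinely needed here.

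The numerical hypothesis $|4K_S+\Delta|\neq\varnothing$ is easy to verify. Writing $H=\zeta^*\cO_{\P^2}(1)$ and letting $E$ be the sum of the exceptional divisors of $\zeta$, one has $K_S=-3H+E$ and $\Delta=dH-2E$ (the coefficient $2$ appears because each node contributes multiplicity~$2$), so
$$4K_S+\Delta=(d-12)H+2E,$$
which is effective precisely when $d\geq 12$. Theorem~\ref{th4} then yields that $\pi\colon X\to S$ is birationally superrigid. If $X$ were $k$-rational, it would be birational to the Mori fiber space $\P^3_k\to\Spec k$, and superrigidity would force this birational map to be square; but the generic fiber of $\pi$ is a conic of dimension~$1$, whereas the generic fiber of $\P^3_k\to\Spec k$ is $\P^3_k$ itself of dimension~$3$, a contradiction.

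The main technical obstacle is the construction of the standard conic bundle model over $k$: one must carry out the twist of the ambient $\P^2$-bundle so that the degeneration divisor is exactly $\tilde C$ (and not $\tilde C+2E$), verify smoothness of $X$, and, above all, check $\rho(X/S)=1$ over~$k$, which rests on $a$ being a non-square in $k(\tilde C)$. This last point is precisely where the nonclosed nature of $k$ is used in an essential way, and explains why Theorem~\ref{th3} does not follow from Sarkisov's theorem over $\ok$.
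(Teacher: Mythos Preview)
Your proposal is correct and follows essentially the same approach as the paper: blow up $\P^2_k$ at the nodes, construct a standard conic bundle model with degeneration divisor the strict transform of $C$, verify $|4K_S+\Delta|\neq\varnothing$ via the computation $4K_S+\Delta=(d-12)H+2E$, and invoke Theorem~\ref{th4}. The paper packages the construction of the standard model into Lemma~\ref{lemstandardmodel}, where the twist you allude to is made explicit as $\cL=\nu^*\cO_{\P^2_k}(d/2)(-E)$ with $X\subset\P(\cL^{-1}\oplus\cL^{-1}\oplus\cO_S)$ cut out by $x^2-ay^2=\sigma z^2$; your identification of $\rho(X/S)=1$ as the key point resting on $a$ being a non-square matches the paper's argument.
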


\begin{proof}
The conic $x^2-ay^2=f$ has a rational point given by $x=\sqrt{a}$ and $y=1$ over~$k[\sqrt{a}](v,w)$ and hence is rational over this field.  This proves the first assertion.

Let $\nu:S\to\P^2_k$ be the blow-up of $\P^2_k$ along the nodes of $C$,  let~$E\subset S$ be its exceptional divisor, and let $\Delta\subset S$ be the strict transform of $C$.  
As $K_S=\nu^*K_{\P^2_k}+E$ and $\Delta=\nu^*C-2E$,  one computes that the divisor 
$$4K_S+\Delta=\nu^*(4K_{\P^2_k}+C)+2E=\nu^*\cO_{\P^2_k}(d-12)+2E$$
is effective because $d\geq 12$. The second assertion therefore follows from Theorem~\ref{th4} applied to the
birational model $\pi:X\to S$ (with $S$ defined as above) 
provided by Lemma \ref{lemstandardmodel} below.  
\end{proof}

\begin{lem}
\label{lemstandardmodel}
The variety considered in Theorem \ref{th3} is birational to a standard conic bundle $\pi:X\to S$ with degeneration divisor $\Delta\subset S$.
\end{lem}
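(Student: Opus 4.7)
The plan is to realize $X$ explicitly as a smooth relative conic inside a $\P^{2}$-bundle over $S$, with the bundle chosen so that — after a twist by the exceptional divisor $E$ of $\nu$ — the multiplicity-$2$ contribution of $E$ to the naive pullback of the discriminant is absorbed by the underlying line bundle, leaving just the smooth curve $\Delta$ as the degeneration locus. The parity of $d$ is what makes this available.

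Explicitly, let $F \in H^{0}(\P^{2}_{k}, \cO(d))$ be the homogenization of $f$, so that $\nu^{\ast}\mathrm{div}(F) = \Delta + 2E$. The line bundle
$$\cM := \cO_S(E) \otimes \nu^{\ast}\cO_{\P^{2}}(-d/2)$$
satisfies $\cM^{\otimes 2} = \cO_S(-\Delta)$, and the canonical section $\tilde F \in H^{0}(S, \cO_S(\Delta))$ with divisor $\Delta$ identifies with a global section of $\cM^{\otimes -2}$. I set $\cE := \cO_{S} \oplus \cO_{S} \oplus \cM$ and, letting $x, y, t$ denote the three fiber coordinates on $\P(\cE)$, define
$$X := \{x^{2} - a y^{2} - \tilde F \, t^{2} = 0\} \subset \P(\cE),$$
with $\pi : X \to S$ the restriction of the bundle projection. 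The verifications then split as: (a) smoothness of $X$ is only nontrivial over $\Delta$, where in local coordinates $(u, v)$ on $S$ with $\Delta = \{v = 0\}$ (and $E = \{u = 0\}$ when relevant) trivializing $\cM$ reduces the equation to $x^{2} - a y^{2} - v t^{2} = 0$, whose smoothness in each of the three affine charts of $\P^{2}$ is a direct Jacobian check; (b) flatness of $\pi$ then follows from miracle flatness; (c) the degeneration divisor, cut out by the discriminant $-4a\tilde F$, is $\Delta$, which is smooth (hence snc) because $\nu$ resolves the nodes of $C$; and birationality to $\{x^{2} - a y^{2} = f(v, w)\}$ is obtained by restricting over an affine chart of $\P^{2}_k$ disjoint from the nodes and the line at infinity, where $\nu$ is an isomorphism and $\cM$ is trivial, and dehomogenizing at $t = 1$.

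The main obstacle is verifying $\rho(X/S) = 1$: this is genuinely a $k$-rational condition that can fail after passage to $\ok$. The key observation is that $a$ is not a square in $k(\Delta)$ — any $\alpha \in k(\Delta)$ with $\alpha^{2} = a$ would have no zeros or poles on the smooth proper curve $\Delta$ (geometrically connected, since $C$ is geometrically integral), hence would lie in $H^{0}(\Delta, \cO_\Delta) = k$, contradicting $a \notin (k^{\ast})^{2}$. Therefore the scheme-theoretic fiber of $\pi$ over the generic point of $\Delta$, namely the $k(\Delta)$-conic $\{x^{2} - a y^{2} = 0\}$, is reduced and $k(\Delta)$-irreducible; combined with smoothness of the remaining fibers, the scheme-theoretic preimage $\pi^{-1}(D)$ of every prime divisor $D \subset S$ is irreducible in $X$. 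Consequently the kernel of the restriction map $\mathrm{Pic}(X) \to \mathrm{Pic}(X_\eta)$, which on the smooth variety $X$ is generated by vertical prime divisors, reduces to $\pi^{\ast}\mathrm{Pic}(S)$. Since $\mathrm{Pic}(X_\eta) \cong \Z$ (generic fiber is a smooth conic over $k(S)$) and $\cO_{\P(\cE)}(1)|_X$ maps to a nonzero class there, we obtain $\mathrm{Pic}(X)/\pi^{\ast}\mathrm{Pic}(S) \cong \Z$, i.e., $\rho(X/S) = 1$.
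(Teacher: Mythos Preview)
Your proof is correct and follows the same approach as the paper: both realize $X$ explicitly as a conic inside a $\P^2$-bundle over $S$ built from the line bundle $\cM=\cO_S(E)\otimes\nu^*\cO(-d/2)$ (the paper works with its inverse $\cL=\nu^*\cO(d/2)(-E)$ and the twisted bundle $\cE=\cL^{-1}\oplus\cL^{-1}\oplus\cO_S$, which yields the same projective bundle and the same $X$), and both establish $\rho(X/S)=1$ by showing that $\pi^{-1}(D)$ is irreducible for every prime divisor $D\subset S$, the case $D\subset\Delta$ relying on $a\notin(k^*)^2$. Your write-up supplies more detail (the local smoothness check, miracle flatness, the explicit Picard sequence), but the argument is identical.
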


\begin{proof}
Consider the line bundle $\cL:=\nu^*\cO_{\P^2_k}(\frac{d}{2})(-E)$ on $S$, where  $\nu:S\to\P^2_k$ is the blow-up of $\P^2_k$ along the nodes of $C$.
As $\Delta=\nu^*C-2E$, there is a section $\sigma\in H^0(S,\cL^{\otimes 2})$ such that $\Delta=\{\sigma=0\}$. Set $\cE:=\cL^{-1}\oplus\cL^{-1}\oplus\cO_S$ and let $p:\P(\cE)\to S$ be its projectivization in the sense of Grothendieck. 

Then $p_*(\cO_{\P(\cE)}(1))=\cL^{-1}\oplus\cL^{-1}\oplus\cO_S$. The three summands of this decomposition give rise to sections $x,y\in H^0(\P(\cE),p^*\cL\otimes\cO_{\P(\cE)}(1))$ and $z\in H^0(\P(\cE),\cO_{\P(\cE)}(1))$.
Let $X\subset\P(\cE)$ be defined by the equation $\{x^2-ay^2=\sigma z^2\}$, and let $\pi:X\to S$ be the restriction of $p$.  It is clear that $X$ is birational to the variety we consider.

To check that $\pi$ is standard with degeneration divisor $\Delta$, the only nontrivial condition one has to verify is that $\rho(X/S)=1$.  This follows from the fact that the inverse image in $X$ of any irreducible divisor $D\subset S$ is irreducible.  If $D\not\subset\Delta$, this is obvious.  If $D\subset\Delta$, this results from the hypothesis that $a\notin (k^*)^2$.
\end{proof}

\bibliographystyle{myamsalpha}
\bibliography{rationality}

\end{document}